\numberwithin{equation}{section}
\theoremstyle{definition}
\newtheorem{definition}{Definition}[section]
\theoremstyle{remark}
\newtheorem{remark}[definition]{Remark}
\theoremstyle{plain}
\newtheorem{theorem}[definition]{Theorem}
\newtheorem{lemma}[definition]{Lemma}
\newtheorem{proposition}[definition]{Proposition}
\newtheorem{result}[definition]{Result}
\newcommand{\eps}{\varepsilon}
\newcommand{\zt}{\zeta}
\newcommand{\zahl}{\mathbb{Z}}
\newcommand{\B}{\boldsymbol{{\sf B}}}
\newcommand{\id}{\mathbb{I}}
\newcommand{\col}{\boldsymbol{\sf a}}
\newcommand{\mlti}{\mathscr{I}}
\newcommand{\zer}{\boldsymbol{\sf Z}}
\newcommand\bsf[1]{\boldsymbol{{\sf {#1}}}}
\newcommand{\esym}{\mathscr{S}}
\newcommand{\bdy}{\partial\mathbb{D}}
\newcommand{\OM}{\Omega}
\newcommand{\Dee}{\mathbb{D}}
\newcommand{\cDee}{\overline{\mathbb{D}}}
\newcommand\quotn[1]{\mathbb{E}_{{#1}}}
\newcommand\sympd[1]{\mathbb{G}_{{#1}}}
\newcommand{\diff}{\mathscr{N}}
\newcommand{\bol}{\mathbb{B}}
\newcommand{\smoo}{\mathcal{C}}
\newcommand{\hol}{\mathcal{O}}
\newcommand{\excep}{\mathfrak{S}_{1,\,n}}
\newcommand{\gene}{{\widetilde{\OM_1}}_{,\,n}}
\newcommand{\geep}{\mathscr{G}_{1,\,n}}
\newcommand\de[1]{{#1}^\bullet}
\newcommand\cyc[1]{\boldsymbol{c}_{{#1}}}
\newcommand\cmpn[1]{\boldsymbol{{\sf C}}_{{#1}}}
\newcommand\interR[2]{[{#1}\,.\,.\,{#2}]}
\newcommand{\vP}{\varPsi_n}
\newcommand{\dg}{{\rm diag}}
\newcommand{\sgn}{{\rm sgn}}
\newcommand{\II}{I^{{\sf C}}}
\newcommand{\kro}{\boldsymbol{\delta}}
\newcommand\mltix[1]{{}^{{#1}}\!\!\mathscr{I}}
\newcommand{\res}{\mathscr{R}_n}
\newcommand{\bcdot}{\boldsymbol{\cdot}}
\newcommand{\ess}{\widetilde{s}}
\newcommand{\exx}{\widetilde{x}}
\newcommand{\aye}{\widetilde{y}}
\newcommand{\pee}{{}^{\raisebox{-2pt}{$\scriptstyle{n}$}}{p}}
\newcommand{\npee}{\widetilde{{}^{\raisebox{-1pt}{$\scriptstyle{n}$}}{p}}}
\newcommand{\impl}{\Longrightarrow}
\newcommand{\lrarw}{\longrightarrow}
\newcommand{\Cn}{\mathbb{C}^n}
\newcommand{\cplx}{\mathbb{C}}
\newcommand{\rea}{\mathbb{R}}
\begin{document}

\title[Domains associated with $\mu$-synthesis]{A family of domains \\ 
associated with $\mu$-synthesis}
\author{Gautam Bharali}
\address{Department of Mathematics, Indian Institute of Science, Bangalore -- 560 012}
\email{bharali@math.iisc.ernet.in}
\thanks{This work is supported in part by a Centre for Advanced Study grant.}
\keywords{Categorical quotients, interpolation, Nevanlinna--Pick problem, $\mu$-synthesis}
\subjclass[2010]{Primary: 30E05, 32F45; Secondary: 47A56, 93D21}

\begin{abstract} 
We introduce a family of domains\,---\,which we call the $\mu_{1,\,n}$-quotients\,--- associated
with an aspect of $\mu$-synthesis. We show that the natural association that the symmetrized polydisc
has with the corresponding spectral unit ball is also exhibited by the $\mu_{1,\,n}$-quotient and its
associated unit ``$\mu_E$-ball''. Here, $\mu_E$ is the structured singular value for the
case $E = \{[w]\oplus(z\id_{n-1})\in \cplx^{n\times n}: z,w\in \cplx\}$, $n = 2, 3, 4,\dots$ Specifically:
we show that, for such an $E$, the Nevanlinna--Pick interpolation problem with matricial data in a
unit ``$\mu_E$-ball'', and in general position in a precise sense, is equivalent to a
Nevanlinna--Pick interpolation problem for the associated $\mu_{1,\,n}$-quotient. Along the
way, we present some characterizations for the $\mu_{1,\,n}$-quotients.   
 
\end{abstract}
\maketitle

\section{Introduction and Main Results}\label{S:intro}

This article is devoted to studying the following infinite family of domains ($\Dee$ here will
denote the open unit disc with centre $0\in \cplx$):
\begin{multline*}
 \quotn{n}\,:=\,\left\{(x,y)\in \Cn\times \cplx^{n-1} :\,\text{the zero set of} \ 
 \left(1+\sum\nolimits^{n-1}_{j=1}(-1)^j y_j z^j\right.\right. \\
 \left.\left.- w\left(\sum\nolimits^{n-1}_{j=0}(-1)^j x_{j+1}z^j\right)\right) \ 
 \text{does not intersect $\overline{\Dee}^2$}\right\}, \; n\geq 2,
\end{multline*}
which we shall call the {\em $\mu_{1,\,n}$-quotients}.
These domains are closely associated with an aspect of $\mu$-synthesis.  
We will provide a couple
of characterizations for $\quotn{n}$ that make it easier to work with these domains
(that each $\quotn{n}$ is a domain is a classical argument; we defer this matter to Remark~\ref{Rem:DOM} below). 
The focus of this work, however, is to establish the connection between the $\quotn{n}$'s
and (the relevant aspect of) $\mu$-synthesis.
\smallskip

$\mu$-synthesis is a part of the theory of robust control of systems comprising
interconnected electronic or mechanical devices each of whose outputs depend linearly
on the inputs. Various performance measures are given by appropriate
$\rea_+$-homogeneous functionals on the space of matrices associated with such
systems\,---\,see, for instance, \cite{helton:n-Efae82}. The ``$\mu$'' in
$\mu$-synthesis refers to such a class of cost functions. Fix $n\in \zahl_+$, $n\geq 2$,
and let $E$ be a linear subspace of $\cplx^{n\times n}$. The functional
\[
 \mu_E(A)\,:=\,\left(\,\inf\{\|X\| : X\in E \ \text{and $(\id - AX)$ is singular}\}\right)^{-1}, \; \;
 A\in \cplx^{n\times n},
\]
is called a {\em structured singular value}. Here, $\|\bcdot\|$ denotes the operator norm 
relative to the Euclidean norm on $\Cn$. Typically, the subspace $E$ consists of all complex
$n\times n$ matrices having a fixed block-diagonal structure. If $E = \cplx^{n\times n}$,
then $\mu_E = \|\bcdot\|$, while if $E$ is the space of all scalar matrices, then 
$\mu_E$ is the spectral radius. The motivation for, and the definition of, $\mu_E$ comes from
the theory of efficient stabilization of systems in which the uncertainties in their governing
parameters are highly structured: the subspace $E$ is meant to encode the structure of
the perturbations to such systems.
\smallskip

In much the same way that a necessary condition for desigining a controller that stabilizes
the aforementioned system (with unstructured uncertainties) is the existence of an interpolant
for certain Nevanlinna--Pick data with values in the unit $\|\bcdot\|$-ball\,---\,see
\cite[Chapter~4]{francis:cH^{infty}ct87}, for instance\,---\,with {\em structured} uncertainties one
needs to understand the Nevanlinna--Pick interpolation problem for the unit ``$\mu_E$-ball'' for a given $E$.
\smallskip

At this juncture, we shift our focus entirely
to the Nevanlinna--Pick interpolation problem. We refer readers (who aren't
already familiar) to the pioneering work of John Doyle \cite{doyle:afssu82} for the control-theory
motivations behind $\mu_E$. With $E$ as above,
let $\OM_E := \{W\in \cplx^{n\times n} : \mu_E(W) < 1\}$. The Nevanlinna--Pick
interpolation problem for $\OM_E$ is the following:
\smallskip

\begin{itemize}
\item[$(*)$] {\em Given $M$ distinct points
$\zt_1,\dots,\zt_M\in \Dee$ and matrices
$W_1,\dots, W_M$ in $\OM_E$, find necessary conditions and sufficient conditions
on $\{(\zt_1, W_1),\dots,(\zt_M, W_M)\}$ for the existence
of a holomorphic map $F:\Dee\longrightarrow \OM_E$ satisfying $F(\zt_j)=W_j, \
j=1,\dots,M$.}
\end{itemize}
\smallskip

\noindent{When $E$ is the class of all scalar matrices in $\cplx^{n\times n}$, $\OM_E$
is the so-called spectral unit ball, which we denote by $\OM_n$.
The problem $(*)$ has been studied intensively for $\OM_n$.
Bercovici {\em et al.} \cite{bercFoiasTann:sclt91} have given a characterization 
for the interpolation data $\{(\zt_1,W_1),\dots,(\zt_M,W_M)\}$ to admit an $\OM_n$-valued
interpolant. However, this characterization involves a non-trivial search over a region in $\cplx^{n^2M}$. Thus,
there is interest in finding alternative characterizations that would at least reduce the dimension
of the search-region: see, for instance, \cite{aglerYoung:2b2sNPp04, bercovici:svcNPid203}.
This was one of the motivations behind the ideas in the paper \cite{aglerYoung:cldC2si99}
by Agler\,\&\,Young, wherein they introduced the symmetrized bidisc. Its $n$-dimensional analogue
(the symmetrized polydisc, denoted by $\sympd{n}$) was introduced by Costara in \cite{costara:osNPp05}.
The importance  of $\sympd{n}$ to $\mu$-synthesis is as follows:
\begin{itemize}
 \item[$a)$] $\dim(\OM_n)\gg \dim(\sympd{n})$, yet, whenever the matrices
  $W_1, W_2,\dots, W_M\in \OM_n$ lie off an explicitly defined set 
  $\mathfrak{S}_n\varsubsetneq \OM_n$, which is of zero Lebesgue measure, the
  problem $(*)$ is equivalent to an associated Nevanlinna--Pick problem for $\sympd{n}$. 
\end{itemize}

\noindent{(Also see \cite{nikolovPflugThomas:sNPCFpng3} for an improvement of $(a)$ when $n = 2, 3$.)}
\smallskip

For most of the systems alluded to above, the associated $E$ comprises matrices whose diagonal
blocks are either scalar matrices or rank-one matrices. We address here the next level of complexity
in the block structure of $E$. The domains $\quotn{n}$, $n\geq 2$,
introduced above are the analogues of the symmetrized polydiscs $\sympd{n}$ when (for a fixed
$n\geq 2$)
\begin{equation}\label{E:struct}
 E\,\equiv\,E^{1,\,n}\,:=\,\{[w]\oplus (z\id_{n-1}) : z, w\in \cplx\}
\end{equation}
(here, $\id_{n-1}$ denotes the $(n-1)\times(n-1)$ identity matrix). Theorem~\ref{T:equiv} below
is precisely the statement $(a)$ with the domains $\OM_E$, for the above choice of $E$, replacing
$\OM_n$. For this choice of $E$, we shall denote $\OM_E$ as $\OM_{1,\,n}$.
\smallskip

The feature $(a)$ is not the only useful insight that $\sympd{n}$ brings to the Nevanlinna--Pick
problem on $\OM_n$. The set $\mathfrak{S}_n$
(which we have not defined; but see \cite[Theorem~2.1]{costara:osNPp05}) helps explain
certain subtleties of the interpolation
problem. We shall elaborate upon these after stating Theorem~\ref{T:equiv}, but we mention here
that the preceding remark motivates our explicit description of the set
$\gene$\,---\,the analogue of $(\OM_n\setminus\mathfrak{S}_n)$ for $\OM_{1,\,n}$\,---\,in
Theorem~\ref{T:equiv}. It is also important to mention that a special case of our domains $\quotn{n}$
is the tetrablock. It was introduced by Abouhajar {\em et al.} \cite{abouWhYoung:sldrmu-s07} and
is the domain $\quotn{2}$.
\smallskip 

To describe $\gene$, we shall need the following:

\begin{definition}\label{D:nonderog} A matrix $A\in \cplx^{n\times n}$ is said to be
{\em non-derogatory} if $A$ admits a cyclic vector. Therefore, $A$ being non-derogatory is
equivalent to $A$ being similar to the companion matrix of its characteristic polynomial\,---\,i.e.,
if $z^n+\sum_{j=1}^ns_jz^{n-j}$ denotes the characteristic polynomial, then
\[
A \ \text{is non-derogatory} \ \iff \ \text{$A$ is similar to}
                 \begin{bmatrix}
                        \ 0  & {} & {} & -s_n \ \\
                        \ 1  & 0  & \text{\LARGE{$\boldsymbol{0}$}}   & -s_{n-1} \ \\
                        \ {} & \ddots  & \ddots & \vdots \ \\
                                \ \text{\LARGE{$\boldsymbol{0}$}} & {} & 1 & -s_{1} \
                \end{bmatrix}_{n\times n}.
\]
\end{definition}

We shall make use of some notations throughout this work. For a matrix $A\in \cplx^{n\times n}$,
$\de{A}$ will denote the $(n-1)\times (n-1)$ matrix obtained by deleting the first row and column
of $A$. For any pair of integers $m\leq n$,
$\interR{m}{n}$ will denote the integer subset $\{m, m+1,\dots,n\}$.  
Assume that $n\in \zahl_+$ is
fixed; for any $j$ such that $1\leq j\leq n$, $\mlti^j$ will denote the set of all increasing
$j$-tuples in $\interR{1}{n}^j$. Finally, for $n$ and $j$ as described, for $I\in \mlti^j$,
and for any $A\in \cplx^{n\times n}$, $A_I$ will denote the $j\times j$ submatrix of
$A$ whose rows and columns are indexed by $I$. Having defined
these notations, we can state our first result:

\begin{theorem}\label{T:equiv}
Let $n\geq 2$, write any $A\in \cplx^{n\times n}$ as $A = [a_{j,\,k}]$,
and let $\OM_{1,\,n}$ be as defined above. Define:
\begin{align*}
  \gene\,:=\,\big\{A\in \OM_{1,\,n} :\,&\,\text{$\de{A}$ is non-derogatory, and} \\
  &\,\text{$(a_{2,1},\dots,a_{n,1})$ is a cyclic vector of $\de{A}$}\big\}.
\end{align*}
Then:
\begin{enumerate}[1)]
  \item $\OM_{1,\,n}\setminus\gene$\,$(=:\excep)$ has zero Lebesgue measure.
  \item Define the map $\pi_n : \cplx^{n\times n}\lrarw \cplx^{2n-1}$ by
  \begin{multline*}
   \pi_n(A)\,:= \\ 
   \bigg(a_{1,1},
   \sum_{I\in \mlti^2\,:\,i_1=1}\!\!\det(A_I),\dots,\!\!\sum_{I\in \mlti^n\,:\,i_1=1}\!\!\det(A_I);
   \sum_{I\in \mlti^1\,:\,i_1\geq 2}\!\!\det(A_I),\dots,
   \!\!\sum_{I\in \mlti^{n-1}\,:\,i_1\geq 2}\!\!\det(A_I)\bigg).
  \end{multline*}
  $\pi_n$ is holomorphic and maps $\OM_{1,\,n}$ onto $\quotn{n}$.
  \item Let $\zt_1,\dots,\zt_M$ be distinct points in $\Dee$ and let
  $W_1,\dots, W_M$ belong to $\gene$. Then,
  there exists a holomorphic map
  $F: \Dee\lrarw \OM_{1,\,n}$ satisfying $F(\zt_j) = W_j$ for every $j\leq M$
  if and only if there exists a holomorphic map $f: \Dee\lrarw \quotn{n}$ satisfying
  $f(\zt_j) = \pi_n(W_j)$ for every $j\leq M$. 
\end{enumerate}
\end{theorem}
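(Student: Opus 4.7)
My plan rests on the identity that, for $A \in \cplx^{n\times n}$ and $X = [w]\oplus(z\id_{n-1})$, the principal-submatrix Leibniz expansion $\det(\id - AX) = \sum_{S\subseteq\interR{1}{n}}(-1)^{|S|}\det(A_S)\det(X_S)$, together with the observation that $\det(X_S) = wz^{|S|-1}$ if $1\in S$ and $z^{|S|}$ otherwise, yields
\[
\det(\id - AX) \,=\, 1 + \sum_{j=1}^{n-1}(-1)^j y_j z^j - w\sum_{k=0}^{n-1}(-1)^k x_{k+1}z^k,
\]
where $(x,y) = \pi_n(A)$. Since $\{X \in E^{1,\,n} : \|X\|\leq 1\}$ is precisely $\{[w]\oplus(z\id_{n-1}) : (z,w)\in\cDee^2\}$, this identity identifies $A \in \OM_{1,\,n}$ with $\pi_n(A)\in\quotn{n}$, yielding the inclusion $\pi_n(\OM_{1,\,n})\subseteq\quotn{n}$ and the holomorphy claim of part~(2). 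A second observation I would highlight at the outset: for any $S = \dg(1,T)$ with $T \in GL(n-1,\cplx)$, one has $SXS^{-1} = X$ for every $X \in E^{1,\,n}$, so both $\mu_{1,\,n}$ and $\pi_n$ are invariant under the conjugation $A \mapsto S^{-1}AS$.

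For the surjectivity $\pi_n(\OM_{1,\,n}) = \quotn{n}$, I would exhibit an explicit polynomial section. Given $(x,y)\in\quotn{n}$, take $C$ to be the $(n-1)\times(n-1)$ companion matrix with $E_j(C) = y_j$, and set $\sigma(x,y) := \begin{pmatrix} x_1 & \alpha^T \\ e_1 & C \end{pmatrix}$. Matching the Leibniz identity against the defining polynomial of $\quotn{n}$ reduces the condition $\pi_n(\sigma(x,y)) = (x,y)$ to a linear equation $\alpha^T\,\mathrm{adj}(\id-zC)\,e_1 = (\text{explicit polynomial of degree} \leq n-2)$. The recursion arising from $(\id-zC)q(z) = \det(\id-zC)\,e_1$ shows that the coefficient vectors $q_0,\dots,q_{n-2}$ of $\mathrm{adj}(\id-zC)\,e_1$ have the unit-upper-triangular structure $q_j = e_{j+1} + (\text{combination of } e_1,\dots,e_j)$, hence form a basis of $\cplx^{n-1}$; so $\alpha$ is uniquely and polynomially determined. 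The Leibniz identity then forces $\det(\id - \sigma(x,y)X)$ to equal the defining polynomial of $\quotn{n}$ at $(x,y)$, which is nonvanishing on $\cDee^2$, so $\sigma(x,y)\in\OM_{1,\,n}$; as a bonus, the image of $\sigma$ lies in $\gene$. Part~(1) is then routine: the determinant of the Krylov matrix of $\de{A}$ relative to $(a_{2,1},\dots,a_{n,1})^T$ is a single polynomial in the entries of $A$ whose nonvanishing cuts $\gene$ out of $\OM_{1,\,n}$, and a concrete element (e.g., a small multiple of a subdiagonal shift) shows it is not identically zero on $\OM_{1,\,n}$.

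The forward direction of (3) is immediate: $f := \pi_n \circ F$. For the reverse direction, for each $W_j \in \gene$ let $T_j$ be the Krylov matrix of $\de{W_j}$ relative to its cyclic vector $(w^j_{2,1},\dots,w^j_{n,1})^T$; the defining conditions of $\gene$ give $T_j \in GL(n-1,\cplx)$, and a direct check using the uniqueness of $\alpha$ shows $\dg(1,T_j)^{-1}W_j\,\dg(1,T_j) = \sigma(\pi_n(W_j))$. I would then choose any holomorphic interpolant $T : \Dee \to GL(n-1,\cplx)$ with $T(\zt_j) = T_j$ (standard: Lagrange-interpolate in matrices, then, if necessary, correct by $p(\zt)U$, $p(\zt) = \prod_j(\zt-\zt_j)$, with a suitable constant matrix $U$, to push the zeros of the determinant out of $\Dee$), and put $F(\zt) := \dg(1,T(\zt))\,\sigma(f(\zt))\,\dg(1,T(\zt))^{-1}$. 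Conjugation invariance places $F$ in $\OM_{1,\,n}$, while $F(\zt_j) = W_j$ by construction. The chief obstacle is the invertibility of the linear system for $\alpha$ in the construction of $\sigma$, resolved by the upper-triangular structure above; the holomorphic interpolation into $GL(n-1,\cplx)$ is a minor technicality that I would dispatch briskly.
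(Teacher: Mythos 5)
Your outline is, in substance, the paper's own proof. Your principal-minor expansion is Lemma~\ref{L:det} (specialized as in \eqref{E:poly}); your observation that $\dg(1,T)$ commutes with every $X\in E^{1,\,n}$ is the paper's conjugation-invariance input (Lemma~\ref{L:orbit} and the ``Fact'' in Section~\ref{S:main}); your section $\sigma(x,y)$ is exactly the matrix $B(x,y)$ of Lemma~\ref{L:imp}, with the adjugate recursion $(\id-zC)q(z)=\det(\id-zC)e_1$ playing the role of the paper's minor recursion in making the linear system for $\alpha$ unit-triangular; the uniqueness of $\alpha$, used to identify $\dg(1,T_j)^{-1}W_j\,\dg(1,T_j)$ with $\sigma(\pi_n(W_j))$, is Lemma~\ref{L:uni_det}; and $F=\dg(1,T)\,\sigma(f)\,\dg(1,T)^{-1}$ is the paper's $F=\psi^{-1}\phi\psi$. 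Your part~(1), via a single Krylov determinant (possession of a cyclic vector already forces $\de{A}$ to be non-derogatory), is a mild and correct simplification of the paper's two-step measure argument. One point to make explicit in part~(2): nonvanishing of $Q_n-wP_n$ on $\cDee^2$ says only that no singular $X\in E^{1,\,n}$ has $\|X\|\le 1$; to get the strict inequality $\mu_{E^{1,\,n}}(\sigma(x,y))<1$ you need the small compactness step (the zero set, being closed, misses a neighbourhood $((1+\eps_0)\Dee)^2$), exactly as in Proposition~\ref{P:mu-to-quot}.

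The one step that fails as written is your construction of a holomorphic $T:\Dee\to GL_{n-1}(\cplx)$ with $T(\zt_j)=T_j$: an additive correction $T_0+pU$ of the matrix Lagrange interpolant by a \emph{constant} $U$ need not exist. Already for $n=2$ (so $GL_1(\cplx)=\cplx^*$), take $\zt_1=0$, $\zt_2=\tfrac12$, $T_1=1$, $T_2=-1$ (realized, e.g., by $W_1=\left(\begin{smallmatrix}0&0\\ 1&0\end{smallmatrix}\right)$ and $W_2=\left(\begin{smallmatrix}0&0\\ -1&0\end{smallmatrix}\right)$, which lie in $\gene$ with $n=2$). Then $T_0(\zt)=1-4\zt$, $p(\zt)=\zt(\zt-\tfrac12)$, and $u=0$ fails since $T_0(\tfrac14)=0$; for $u\neq0$ the roots $r_1,r_2$ of $T_0+up$ satisfy $r_1r_2=1/u$ and $r_1+r_2=4/u+\tfrac12$, so if both lay outside $\Dee$ then $|u|\le 1$ from the product, while $|r_1|+|r_2|\le 1+1/|u|$ together with $|r_1|+|r_2|\ge |r_1+r_2|\ge 4/|u|-\tfrac12$ forces $|u|\ge 2$ --- a contradiction. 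Hence no constant $u$ works for this (perfectly admissible) data. The interpolation fact you need is nonetheless true, and the standard proof is the one the paper uses: write $T_j=\exp(L_j)$, choose a polynomial matrix $\Psi$ with $\Psi(\zt_j)=L_j$, and set $T=\exp\circ\Psi$, which is automatically $GL_{n-1}(\cplx)$-valued. With that replacement, your argument for part~(3) is complete.
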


Engineers have had some success in numerically computing solutions to the problem $(*)$.
These methods are based on iterative schemes that are supported by convincing, but largely
{\em heuristic}, arguments. However, we now know that the problem $(*)$ is ill-conditioned in a specific
sense. The set $\mathfrak{S}_n$ in $(a)$ (and its analogue
$\excep$, given by Theorem~\ref{T:equiv}) gives us a precise description of this problem:
\begin{itemize}
 \item[$b)$] (following \cite[Example~2.3]{aglerYoung:2psNPp00} by Agler--Young) There exist
 continuous one-parameter families of Nevanlinna--Pick data
 $\{(\zt_1, W_{1,\alpha}), (\zt_2, W_{2,\alpha})\}_{\alpha\in \Dee}$  with
 $(W_{1,\alpha}, W_{2,\alpha})\in (\OM_n\setminus\mathfrak{S}_n)^2 \ \forall
 \alpha\neq 0$ such that there exist $\OM_n$-valued interpolants
 $\forall \alpha\neq 0$, but {\bf none} for $\alpha = 0$. In this case,
 either $W_{1,0}\in \mathfrak{S}_n$ or $W_{2,0}\in \mathfrak{S}_n$.
\end{itemize}

\noindent{This provides useful information for testing the stability of some of
the numerical algorithms used. It is the information that $(b)$ provides that is our
second motivation for constructing analogues of $\sympd{n}$ for the case of
$E^{1,\,n}$.}
\smallskip

Indeed, Abouhajar {\em et al.} have shown \cite[Remark~9.5-$(iv)$]{abouWhYoung:sldrmu-s07}
that the problem $(*)$ for  $\OM_{1,2}$ is also ill-conditioned,
exactly as described in $(b)$ with $\OM_{1,2}$ replacing $\OM_n$ therein. This pathology
extends to $\OM_{1,\,n}$ for all $n\geq 2$. It turns out
that, analogous to $(b)$, the problem lies in either $W_{1,0}$ or $W_{2,0}$ belonging
to $\excep$ (as defined in Theorem~\ref{T:equiv}-(1)). In fact, it is
\cite[Remark~9.5-$(iv)$]{abouWhYoung:sldrmu-s07} that led us to intuit what
$\excep$ must be for general $n$. 
\smallskip
  
Our second main result provides a necessary condition for the existence of an interpolant
that solves the problem $(*)$ for 
$E^{1,\,n}$. For this, we must give some definitions. For each $(x,y)\in \Cn\times \cplx^{n-1}$,
let us define:
\begin{align*}
 P_n(z; x)\,&:=\,\sum\nolimits^{n-1}_{j=0}(-1)^j x_{j+1}z^j, \\
 Q_n(z; y)\,&:=\,1+\sum\nolimits^{n-1}_{j=1}(-1)^j y_j z^j, \ \ z\in \cplx, \\
 \vP(z; x,y)\,&:=\,\begin{cases}
 					\dfrac{P_n(z; x)}{Q_n(z;y)} \\
 					\text{\small{(with the understanding that, in evaluating $\vP(\bcdot\,; x,y)$, any}} \\
 					\text{\small{common linear factors of $P_n(\bcdot\,; x)$ and $Q_n(\bcdot\,; y)$ are
 					first cancelled)}}.
 					\end{cases}
\end{align*}
With these definitions, we can state our next result.

\begin{theorem}\label{T:nec}
Let $\zt_1,\dots,\zt_M$ be distinct points in $\Dee$ and let
$W_1,\dots, W_M$ in $\OM_{1,\,n}$, $n\geq 2$. Express the map
$\pi_n$ as $\pi_n = (X,Y) : \cplx^{n\times n}\lrarw \Cn\times \cplx^{n-1}$.
If there exists a holomorphic map
$F: \Dee\lrarw \OM_{1,\,n}$ satisfying $F(\zt_j) = W_j$ for every $j$, then,
for each $z\in \cDee$, the matrix
\[
 M_z\,:=\,\left[\frac{1-\overline{\vP(z; X(W_j), Y(W_j)})\,\vP(z; X(W_k), Y(W_k))}
              {1-\overline{\zt_j}\zt_k}\right]_{j,k=1}^M
\]
is positive semi-definite.
\end{theorem}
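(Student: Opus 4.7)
The plan is to reduce the assertion to the necessity direction of the classical Nevanlinna--Pick theorem by constructing, for each $z\in \cDee$, a holomorphic ``test function'' $\phi_z:\OM_{1,\,n}\lrarw \Dee$ whose value at $W$ is exactly $\vP(z;X(W),Y(W))$. Once such a $\phi_z$ is available, any holomorphic interpolant $F:\Dee\lrarw \OM_{1,\,n}$ with $F(\zt_j)=W_j$ yields a holomorphic self-map $g_z:=\phi_z\circ F:\Dee\lrarw \Dee$ satisfying $g_z(\zt_j)=\vP(z;X(W_j),Y(W_j))$; the classical Pick criterion applied to $g_z$ then gives precisely $M_z\succeq 0$.

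To produce $\phi_z$, I would first invoke part~(2) of Theorem~\ref{T:equiv}: since $\pi_n(\OM_{1,\,n})=\quotn{n}$, one has $(X(W),Y(W))\in \quotn{n}$ for every $W\in \OM_{1,\,n}$. The next step is the pointwise estimate $|P_n(z;x)|<|Q_n(z;y)|$ for every $(x,y)\in \quotn{n}$ and every $z\in \cDee$. Indeed, if the reverse inequality held at some $z_0\in \cDee$, one could take $w_0:=Q_n(z_0;y)/P_n(z_0;x)$ when $P_n(z_0;x)\ne 0$ (so that $|w_0|\leq 1$), or $w_0:=0$ when $P_n(z_0;x)=0$ (which then forces $Q_n(z_0;y)=0$ as well), and in either case $(z_0,w_0)\in \cDee^{2}$ would lie in the zero set appearing in the definition of $\quotn{n}$\,---\,a contradiction. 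This estimate has two payoffs: $Q_n(z;Y(W))$ never vanishes on $\cDee$ for $W\in \OM_{1,\,n}$, so no cancellation of common linear factors is ever needed in the definition of $\vP(z;X(W),Y(W))$; and $|\vP(z;X(W),Y(W))|<1$ for every $z\in \cDee$. Setting $\phi_z(W):=P_n(z;X(W))/Q_n(z;Y(W))$ therefore gives a well-defined map into $\Dee$, and since the components of $\pi_n$ are polynomials in the entries of $W$ and the denominator never vanishes on $\OM_{1,\,n}$, $\phi_z$ is holomorphic.

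The rest is routine: applying the necessity direction of the classical Nevanlinna--Pick theorem to $g_z=\phi_z\circ F$ finishes the argument for each $z\in \cDee$\,---\,boundary values $|z|=1$ are handled on the same footing as interior ones, since the inequality $|\vP(\cdot;x,y)|<1$ holds on the \emph{closed} disc for every $(x,y)\in \quotn{n}$. I do not foresee a serious obstacle beyond the pointwise estimate of the second paragraph; the main conceptual point is the realization that $\vP(z;\cdot,\cdot)\circ \pi_n$ supplies a natural one-parameter family of holomorphic maps $\OM_{1,\,n}\lrarw \Dee$ that transfers the interpolation question to the classical Schur--Pick setting.
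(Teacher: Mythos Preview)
Your proposal is correct and follows essentially the same line as the paper's proof: reduce to the classical Pick criterion by composing $F$ with the map $\zeta\mapsto \vP(z;\pi_n(\bcdot))$ into $\Dee$. The only cosmetic difference is that the paper obtains the key inequality $|P_n(z;x)|<|Q_n(z;y)|$ on $\cDee$ by citing the already-established Theorem~\ref{T:char1}, whereas you derive it directly from the definition of $\quotn{n}$ (which is exactly the argument hidden inside the proof of Theorem~\ref{T:char1}).
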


\begin{remark}\label{Rem:impli}
Implicit in the statement of Theorem~\ref{T:nec} is that if $(x, y)\in \quotn{n}$, then
the rational function $\vP(\bcdot\,; x,y)$ has no poles in $\Dee$. In fact, much more
can be said about $\vP(\bcdot\,; x,y)$, as we shall see in Section~\ref{S:char}.
\end{remark}

Theorem~\ref{T:nec} is an easy corollary to a certain characterization of the set
$\quotn{n}$ in terms of the functions $\vP(\bcdot\,; x,y)$. It also turns out
that the sets $\quotn{n}$, $n\geq 2$, form a certain hierarchy in the sense that membership
in $\quotn{n+1}$ can be characterized in terms of membership in
$\quotn{n}$, $n\geq 2$. The precise results (Theorems~\ref{T:char1} and \ref{T:char2})
will be presented in Section~\ref{S:char}.
\smallskip

We ought to state that the theorems presented in this section address only a {\em small part}
of what control engineers need. The chief utility to engineers is that, in view of $(b)$ above
and the paragraph that follows it, the set $(\OM_{1,\,n}\setminus\gene)$ raises a very
specific flag in testing numerical methods for constructing Nevanlinna--Pick interpolants
that rely on limit processes. The question arises: given that, in real-world stabilization
problems (with structured uncertainties) one encounters other forms of the space $E$,
what can one say about Theorems~\ref{T:equiv} and \ref{T:nec} for general $E$\,? We make some
remarks on this issue, and on the subject of {\em categorical quotients}\,---\,of which the
reader gets a {\em very} fleeting glimpse in Section~\ref{S:prelim}\,---\,in
Section~\ref{S:main} (Remarks~\ref{Rem:good-q} and \ref{Rem:returns}) below.
\medskip

\section{A Few Preliminary Lemmas}\label{S:prelim}

This section is devoted to a few lemmas that we will need in the subsequent sections.
\smallskip

In the following lemma, we shall follow the notation introduced in Section~\ref{S:intro}
and the standard multi-index notation. A diagonal $n\times n$ matrix having the number
$a_j$ as the entry in its $j$th row and column will be denoted by
$\dg(a_1,\dots, a_n)$. 

\begin{lemma}\label{L:det}
Fix an integer $n\geq 2$ and let $A\in \cplx^{n\times n}$. Then:
\begin{equation}\label{E:key-det}
 \det\left(\id_n - A\,\dg(z_1,\dots, z_n)\right)\,=\,1+\sum_{j=1}^n(-1)^j\sum_{I\in \mlti^j}\det(A_I)z^I.
\end{equation}
\end{lemma}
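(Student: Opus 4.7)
The plan is to exploit the multilinearity of the determinant in its columns. Let $a_1, \dots, a_n$ denote the columns of $A$ and $e_1,\dots, e_n$ the standard basis of $\Cn$, so that the $k$-th column of $\id_n - A\,\dg(z_1,\dots,z_n)$ equals $e_k - z_k a_k$. Expanding the determinant multilinearly in each column independently yields
\[
 \det\bigl(\id_n - A\,\dg(z_1,\dots,z_n)\bigr)
 \,=\,\sum_{J\subseteq \interR{1}{n}} (-1)^{|J|}\Bigl(\prod_{k\in J} z_k\Bigr)\,\det(B_J),
\]
where $B_J$ denotes the matrix whose $k$-th column equals $a_k$ for $k\in J$ and $e_k$ for $k\notin J$.

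Next I would compute $\det(B_J)$ for an arbitrary $J\subseteq \interR{1}{n}$. Applying a simultaneous permutation to the rows and columns of $B_J$ that sends $J^{\sf c}$ to the first $|J^{\sf c}|$ positions (and preserves the relative order within $J^{\sf c}$ and $J$)\,---\,an operation that leaves the determinant unchanged\,---\,converts $B_J$ into a block upper-triangular matrix with diagonal blocks $\id_{|J^{\sf c}|}$ and $A_J$ (in the notation of Section~\ref{S:intro}). The zero lower-left block arises because each original column $e_k$ with $k\notin J$ has its unique nonzero entry in row $k\in J^{\sf c}$. Hence $\det(B_J) = \det(A_J)$.

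Substituting this into the previous display, grouping terms according to $j := |J|$, and identifying each $J\subseteq \interR{1}{n}$ of cardinality $j$ with an element $I\in \mlti^j$\,---\,with $z^I = \prod_{k\in I} z_k$ in the standard multi-index convention\,---\,yields
\[
 \det\bigl(\id_n - A\,\dg(z_1,\dots,z_n)\bigr)
 \,=\,\sum_{j=0}^n (-1)^j \sum_{I\in \mlti^j}\det(A_I)\,z^I,
\]
which is precisely \eqref{E:key-det} once the $j=0$ term (for which the empty product $\det(A_\emptyset)$ equals $1$) is separated off. The only mildly delicate point is the permutation bookkeeping in the second step, which is handled by the fact that a simultaneous row and column permutation contributes $\sgn(\sigma)^2 = 1$ to the determinant; I do not anticipate any significant obstacle beyond that.
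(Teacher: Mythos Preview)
Your proof is correct. The paper takes a related but slightly different route: it applies the Leibniz permutation expansion directly to $\det(B^T)$ with $B = \id_n - A\,\dg(z_1,\dots,z_n)$, expands each product $\prod_k(\kro_{\sigma(k),\,k} - a_{\sigma(k),\,k}z_k)$, and observes that only permutations fixing $\II$ contribute to the coefficient of $z^I$\,---\,which is then recognized as $\det(A_I)$. Your argument instead uses column multilinearity to reduce the problem to evaluating $\det(B_J)$, and then a simultaneous row-and-column permutation to exhibit $B_J$ as block upper-triangular with diagonal blocks the identity and $A_J$. Both approaches are elementary and of comparable length; yours has the minor advantage of sidestepping explicit bookkeeping with $\sgn(\sigma)$ and Kronecker deltas, while the paper's version makes the combinatorics of which permutations survive more explicit.
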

\begin{proof}
Let us denote the matrix on the left-hand side above by $B$. As usual, we
write $A = [a_{j,\,k}]$ and $S_n$ for the group of permutations of $n$ distinct
objects. We write down the classical expansion of $\det(B^T)$  to see that
\begin{align}
 \det(B)\,&=\,\sum_{\sigma\in S_n}\sgn(\sigma)\prod_{k=1}^n
 \left(\kro_{\sigma(k),\,k}-a_{\sigma(k),\,k}\,z_k\right) \notag \\
 &=\,1+\sum_{\sigma\in S_n}\sgn(\sigma)
 \sum_{j=1}^n(-1)^j\left[\sum_{I\in \mlti^j}\left(\prod\nolimits_{s\in I}\!a_{\sigma(s),\,s}\right)
 \left(\prod\nolimits_{t\in \II}\!\kro_{\sigma(t),\,t}\right)z^I\right], \label{E:expan}
\end{align}
where $\II$ is the abbreviation for $\interR{1}{n}\setminus I$, and with the understanding
that a product indexed by the null set equals $1$.
Clearly, the second product on the right-hand side of \eqref{E:expan} is non-zero if and only
if $\sigma$ fixes the subset $\II$. For any subset $J\subseteq \interR{1}{n}$, write
\[
 {\sf Fix}(J)\,:=\,\{\sigma\in S_n : \sigma \ \text{fixes $J$}\}.
\]
Then, from \eqref{E:expan}, we get
\[
 \det(B)\,=\,1+\sum_{j=1}^n(-1)^j\sum_{I\in \mlti^j}\left(
 \sum\nolimits_{\sigma\in {\sf Fix}(\II)}\!\sgn(\sigma)\prod\nolimits_{s\in I}\!a_{\sigma(s),\,s}\right)z^I.
\]
Given the definition of the submatrices $A_I$, the above identity is precisely \eqref{E:key-det}.
\end{proof}

For the next lemma, we present a convention that we will follow in this article.
The notation $\cplx^*\oplus GL_{n-1}(\cplx)$, $n\geq 2$, will denote the group
(with respect to matrix multiplication) of $n\times n$ matrices $G$ that are block-diagonal,
with the $(1,1)$-entry of $G$ being a non-zero complex number and
$\de{G}\in GL_{n-1}(\cplx)$.

\begin{lemma}\label{L:orbit}
Let $(x, y)\in \Cn\times\cplx^{n-1}$, $n\geq 2$, and let $\pi_n : \cplx^{n\times n}\lrarw
\Cn\times\cplx^{n-1}$ be the map defined in Theorem~\ref{T:equiv}. Let $A\in
\pi_n^{-1}\{(x, y)\}$. Then, the congugacy orbit
\[
 \{G^{-1}AG : G\in \cplx^*\oplus GL_{n-1}(\cplx)\}\,\subseteq\,\pi_n^{-1}\{(x, y)\}.
\]
\end{lemma}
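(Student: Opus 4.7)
The plan is to pass from matrices to polynomials. Write $G = [\alpha]\oplus H$ with $\alpha\in \cplx^*$ and $H\in GL_{n-1}(\cplx)$, and introduce the two-variable polynomial
\[
p_A(w,z)\,:=\,\det\bigl(\id_n - A\,\dg(w,z,\dots,z)\bigr).
\]
I will establish two facts: \emph{(i)} $p_A$ is unchanged when $A$ is replaced by $G^{-1}AG$; and \emph{(ii)} the coefficients of $p_A$ determine $\pi_n(A)$. Together these will yield $\pi_n(G^{-1}AG) = \pi_n(A)$, which is exactly what is to be shown.

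For \emph{(i)}, the key observation is that
\[
\dg(w,z,\dots,z)\,=\,[w]\oplus(z\,\id_{n-1})
\]
commutes with every $G\in \cplx^*\oplus GL_{n-1}(\cplx)$: in the $(1,1)$ block the factors are $1\times 1$, while in the other block the factor $z\,\id_{n-1}$ is a scalar matrix and hence central in $GL_{n-1}(\cplx)$. Consequently
\[
\id_n - (G^{-1}AG)\,\dg(w,z,\dots,z)\,=\,G^{-1}\bigl(\id_n - A\,\dg(w,z,\dots,z)\bigr)\,G,
\]
and passing to determinants gives $p_{G^{-1}AG}(w,z) = p_A(w,z)$ identically in $(w,z)$.

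For \emph{(ii)}, I specialize Lemma~\ref{L:det} by setting $z_1 = w$ and $z_2 = \dots = z_n = z$. For each multi-index $I = (i_1,\dots,i_j)\in \mlti^j$, the monomial $z^I$ equals $z^j$ when $i_1\geq 2$ and equals $w\,z^{j-1}$ when $i_1 = 1$. Splitting the inner sum in \eqref{E:key-det} along this dichotomy and comparing the resulting expression with the definitions of $P_n$, $Q_n$ and $\pi_n$, one obtains
\[
p_A(w,z)\,=\,Q_n(z;Y)\,-\,w\,P_n(z;X),\qquad (X,Y)\,:=\,\pi_n(A).
\]
The coefficients of the monomials $z^k$, $1\leq k\leq n-1$, then recover (up to sign) the components of $Y$, and those of $w\,z^k$, $0\leq k\leq n-1$, recover the components of $X$. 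Hence $p_A$ determines $\pi_n(A)$, and combining with \emph{(i)} completes the argument.

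I do not anticipate a genuine obstacle: the computation in \emph{(ii)} is a short bookkeeping exercise on top of the already-established Lemma~\ref{L:det}, and \emph{(i)} reduces to the remark that $\cplx^*\oplus GL_{n-1}(\cplx)$ is precisely the invertible part of the commutant of the subspace $E^{1,\,n}$. It is worth flagging, however, that it is exactly the match between the block pattern of $E^{1,\,n}$ and that of the conjugation group that lets this particular group act along the fibres of $\pi_n$; the assertion of the lemma would fail for a larger group.
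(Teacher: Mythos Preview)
Your argument is correct, and it takes a genuinely different route from the paper's own proof. The paper argues componentwise: it first observes that the $Y$-part of $\pi_n$ records the elementary symmetric functions of the eigenvalues of $\de{A}$, which are similarity invariants of the $(n-1)\times(n-1)$ block; it then uses that the full sums $\sum_{I\in\mlti^j}\det(A_I)$ are similarity invariants of $A$ and subtracts off the $Y$-contribution to conclude that each $X_j$ is also constant on the orbit. Lemma~\ref{L:det} is not invoked at all in that argument.

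Your approach instead packages all of $\pi_n(A)$ into the single two-variable polynomial $p_A(w,z)=\det(\id_n-A([w]\oplus z\id_{n-1}))$, and the invariance under $\cplx^*\oplus GL_{n-1}(\cplx)$ then drops out of the single observation that this group centralizes $E^{1,\,n}$. This is more conceptual: it makes transparent \emph{why} this particular group acts along the fibres of $\pi_n$, and it is exactly the mechanism behind the ``Fact'' that the paper later invokes in the proof of Theorem~\ref{T:equiv}. It also puts Lemma~\ref{L:det} to immediate use. The paper's version, on the other hand, is slightly more self-contained (it needs nothing beyond the classical similarity invariance of $\esym_{m,j}$) and would adapt readily if one only wanted invariance of $Y$ or of $X$ in isolation.
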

\begin{proof}
We shall denote the conjugacy orbit $\{G^{-1}AG : G\in \cplx^*\oplus GL_{n-1}(\cplx)\}$ as 
$O_A$. It suffices to show that $\pi_n$ is constant on $O_A$. As in Section~\ref{S:intro},
we write $\pi_n = (X, Y)$. We will denote any element $G\in \cplx^*\oplus GL_{n-1}(\cplx)$
as $g\oplus \Gamma$:  $g$ being the $(1,1)$-entry of $G$, and $\de{G} = \Gamma$.
It is a classical fact that, by definition:
\begin{equation}\label{E:Y}
 Y(G^{-1}AG)\,=\,\big(\esym_{n-1,1}(\sigma(\Gamma^{-1}\de{A}\Gamma)),\dots,
 \esym_{n-1,\,n-1}(\sigma(\Gamma^{-1}\de{A}\Gamma))\big) \ \  
\end{equation}
where
\begin{align*}
 \esym_{n-1,\,j}\,&:=\,\text{the $j$-th elementary symmetric polynomial in $n-1$ indeterminates}, \\
 \sigma(B)\,&:=\,\text{the list of eigenvalues of the matrix $B$, listed according to multiplicity.}
\end{align*}
As $\esym_{n-1,\,j}$ is a similarity invariant, \eqref{E:Y} implies that $Y$ is constant on $O_A$.
\smallskip

Therefore, it suffices to show that $X$ is constant on $O_A$.
For any $G = g\oplus\Gamma$ as above, and $j = 2,\dots, n$, we compute:
\begin{align}
 \sum_{I\in \mlti^j}\!\det(A_I)\,&=\,\sum_{I\in \mlti^j}\!\det((G^{-1}AG)_I) \notag \\
 &=\,\sum_{I\in \mlti^j\,:\,i_1=1}\!\!\det((G^{-1}AG)_I) +
 		\sum_{I\in \mlti^j\,:\,i_1\geq 2}\!\!\det((G^{-1}AG)_I)  \notag \\
 &=\,\sum_{I\in \mlti^j\,:\,i_1=1}\!\!\det((G^{-1}AG)_I) + 
 		\esym_{n-1,\,j}(\sigma(\Gamma^{-1}\de{A}\Gamma)). \label{E:cnjgtn}
\end{align}
The left-hand side of \eqref{E:cnjgtn} is a constant. Therefore, it follows from
\eqref{E:cnjgtn} that the $j$-th component of the map $X : \cplx^{n\times n}\lrarw \Cn$,
$j = 2,\dots n$, is constant on $O_A$. And, of course, the $(1,1)$ entry
of $(g\oplus\Gamma)^{-1}A(g\oplus\Gamma)$ does not vary with $G$. Hence the lemma.
\end{proof}
 
The next two lemmas will be essential to the proof of Theorem~\ref{T:equiv}.

\begin{lemma}\label{L:imp}
Let $(x, y)\in \Cn\times\cplx^{n-1}$, $n\geq 2$. There exist polynomials
$p_k\in \cplx[x, y]$, $k=1,\dots, (n-1)$, such that, if we define
\[
 B(x, y)\,:=\,\begin{bmatrix}
 			\ x_1     &\vline & p_1(x,y)     & \dots & p_{n-2}(x,y) & p_{n-1}(x,y) \ \\ \hline
 			\ 1        &\vline & 0               & {}     & {}               & (-1)^ny_{n-1} \ \\
 			\ 0        &\vline & 1  & 0        & \text{\LARGE{$\boldsymbol{0}$}} & (-1)^{n-1}y_{n-2} \ \\
 			\ \vdots &\vline & {} & \ddots & \ddots & \vdots \ \\
 			\ 0        &\vline & \text{\LARGE{$\boldsymbol{0}$}} & {}  & 1 &  y_{1} \
                \end{bmatrix},
\]
then, for each $j=2,\dots, n$,
\[
 \sum_{I\in \mlti^j\,:\,i_1=1}\!\!\det(B(x,y)_I)\,=\,x_j.
\]
Furthermore, for a given $(x,y)$, $p_1(x,y),\dots p_{n-1}(x,y)$ are the {\em unique} numbers
for which the above equations hold true.
\end{lemma}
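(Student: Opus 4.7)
The plan is to recast the lemma as a triangular linear system. The starting point is the identity
\begin{equation*}
 \det\bigl(\id_n - A\,\dg(w, z, \ldots, z)\bigr)\,=\,Q_n(z; Y(A)) - w\,P_n(z; X(A))
\end{equation*}
for $A\in \cplx^{n\times n}$, where $(X, Y) = \pi_n$; this follows from Lemma~\ref{L:det} by specializing $z_1 = w$ and $z_2 = \cdots = z_n = z$ and organizing the sums by the cases $i_1 = 1$ and $i_1 \ge 2$. With this, the $n-1$ identities $\sum_{I\in \mlti^j,\,i_1=1}\det(B(x,y)_I) = x_j$ for $j = 2, \ldots, n$ are equivalent to the single polynomial identity (in $w, z$)
\begin{equation*}
 \det\bigl(\id_n - B(x,y)\,\dg(w, z, \ldots, z)\bigr)\,=\,Q_n(z; y) - w\,P_n(z; x);
\end{equation*}
the matching of the $X_1(B)$- and $Y(B)$-components is immediate from the block structure of $B$.

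Setting $w = 0$ reduces to $\det(\id_{n-1} - z\,\de{B}) = Q_n(z; y)$, which holds independently of $p_1,\dots,p_{n-1}$ because $\de{B}$ is the companion matrix of the polynomial $z^{n-1} + \sum_{m=1}^{n-1}(-1)^m y_m z^{n-1-m}$. The coefficient of $w$ on the two sides yields the genuine equation: cofactor expansion along the first row of $\id_n - B\,\dg(w, z, \ldots, z)$ (the unique row containing the $p_k$'s) gives
\begin{equation*}
 z \sum_{k=1}^{n-1} (-1)^k p_k\,T_k(z; y)\,=\,x_1 Q_n(z; y) - P_n(z; x),
\end{equation*}
where $T_k(z; y)$ denotes the determinant of the $(n-2)\times(n-2)$ submatrix of $\id_{n-1} - z\,\de{B}$ obtained by deleting the first row and the $k$-th column. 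The right-hand side vanishes at $z = 0$, so it is divisible by $z$, and matching coefficients of $z^0, z^1, \ldots, z^{n-2}$ gives an $(n-1)\times(n-1)$ linear system for $p_1, \ldots, p_{n-1}$, with coefficient matrix in $\cplx[y]$ and right-hand side in $\cplx[x, y]$.

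The main structural claim is that this coefficient matrix is lower triangular with $-1$'s on the diagonal. Equivalently, $T_k(z; y)$ has $z$-valuation exactly $k-1$ with leading coefficient $(-1)^{k-1}$. To see this, apply the adjugate identity
\begin{equation*}
 T_k(z; y)\,=\,(-1)^{k+1} Q_n(z; y) \cdot \bigl[(\id_{n-1} - z\,\de{B})^{-1}\bigr]_{k,1}
\end{equation*}
together with the Neumann expansion $(\id_{n-1} - z\,\de{B})^{-1} = \sum_{m\ge 0} z^m\,\de{B}^m$. Since $\de{B}$ is a companion matrix, one has $\de{B}\,e_j = e_{j+1}$ for $j = 1, \ldots, n-2$, hence $\de{B}^m e_1 = e_{m+1}$ for $m \le n - 2$. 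Therefore $[(\id_{n-1} - z\,\de{B})^{-1}]_{k,1} = z^{k-1} + (\text{terms of order}\ge n - 1)$, and since $Q_n(0; y) = 1$, multiplying by $Q_n(z; y)$ and by $(-1)^{k+1}$ delivers the claim. With the triangular structure established, back-substitution produces the unique solution $p_1, \ldots, p_{n-1} \in \cplx[x, y]$: the pivots being $\pm 1$ ensures no denominators in $y$ are ever introduced. The uniqueness of the numbers $p_k$ at each fixed $(x, y)$ is likewise immediate, since the pivots remain $\pm 1$ upon specialization. The chief technical hurdle will be verifying the $z$-valuation claim for the minors $T_k(z; y)$; the rest is essentially bookkeeping.
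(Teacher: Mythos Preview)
Your argument is correct, and it reaches the same endpoint as the paper\,---\,a lower-triangular linear system in $p_1,\dots,p_{n-1}$ with diagonal entries $\pm 1$\,---\,but by a genuinely different route. The paper works \emph{directly} with the principal minors: it introduces auxiliary matrices $M(m;w,y)$ and quantities $\Phi(k,m;w,y)=\sum_{I\in \mltix{m}^k(1)}\det(M(m;w,y)_I)$, and establishes the recursion $\Phi(k,m;w,y)=-\Phi(k-1,m-1;w,y)+w_{n+1-m}y_{k-1}$ by expanding each relevant minor along its first column. This recursion, unwound, is exactly the statement that the system \eqref{E:s_leq} is lower triangular with $(-1)^j$ on the diagonal. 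Your approach instead feeds the problem through Lemma~\ref{L:det} to obtain the single polynomial identity $\det(\id_n-B\,\dg(w,z,\dots,z))=Q_n(z;y)-wP_n(z;x)$, isolates the coefficient of $w$, and then uses the adjugate formula together with the Neumann series $(\id-z\,\de{B})^{-1}=\sum_{m\geq 0}z^m\de{B}^m$ and the cyclic relation $\de{B}^m e_1=e_{m+1}$ ($m\leq n-2$) to read off the $z$-valuation of each $T_k$. What your route buys is a cleaner, more conceptual verification of the triangular structure: the companion-matrix shift $\de{B}e_j=e_{j+1}$ does all the work, and no ad hoc recursion is needed. What the paper's route buys is self-containment\,---\,it does not invoke Lemma~\ref{L:det}, adjugates, or power series\,---\,and it makes the coefficients of the triangular system more explicit (e.g.\ one sees directly that the off-diagonal entries are $\mathbb{C}[y]$-linear combinations of the $y_j$'s). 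Either way, once the triangularity with unit pivots is in hand, the existence and uniqueness of the polynomial solutions $p_k\in\cplx[x,y]$ follow by back-substitution exactly as you say.
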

\begin{proof}
Let $\B$ be the $n\times n$ matrix obtained by replacing the entries $p_k(x, y)$ by
the unknowns $Z_k$, $k =1,\dots, (n-1)$, in the matrix $B(x, y)$ given above. We shall
need some auxiliary objects. First, given a vector $w\in \Cn$, for each integer
$m\in \interR{1}{n}$, let us define the matrices
\[
 M(m; w,y)\,:=\,\begin{bmatrix}
 			    \ w_{n+1-m} &\vline & w_{n+2-m} & \dots & w_{n-1} & w_{n} \ \\ \hline
 			    \ 1              &\vline & 0               & {}     & {}        & (-1)^my_{m-1} \ \\
 			    \ 0              &\vline & 1               & 0        
 			                                                   & \text{\LARGE{$\boldsymbol{0}$}} & (-1)^{m-1}y_{m-2} \ \\
                       \ \vdots &\vline & {} & \ddots & \ddots & \vdots \ \\
                       \ 0        &\vline & \text{\LARGE{$\boldsymbol{0}$}} & {}  & 1 &  y_{1} \
                       \end{bmatrix}_{m\times m}.
\]
For $m$ as above, we shall write:
\begin{align*}
 \mltix{m}^j\,&:=\,\text{the set of all increasing $j$-tuples in $\interR{1}{m}$}, \ 1\leq j\leq m, \\
 \mltix{m}^j(1)\,&:=\,\{I\in \mltix{m}^j : i_1=1\}, \ 1\leq j\leq m, \\
 \mltix{m}^j(1,2)\,&:=\,\{I\in \mltix{m}^j : i_1=1, \ i_2=2\}, \ 2\leq j\leq m.
\end{align*}
Finally, we shall define, for $m$ as above, and $1\leq k\leq m$,
\[
 \Phi(k,m; w,y)\,:=\,\sum_{I\in \mltix{m}^k(1)}\!\det(M(m; w,y)_I).
\]
We begin with an elementary observation. Suppose, {\em for the moment}, $n\geq 4$.
Then, for $(m,k)$ such that $3\leq k\leq m-1$, we have
\begin{align}
 \Phi(k,m; w,y)\,&=\,\sum_{I\in \mltix{m}^k(1,2)}\!\!\det(M(m; w,y)_I) + 
 				\sum_{I\in \mltix{m}^k(1)\setminus\mltix{m}^k(1,2)}\!\!\!\det(M(m; w,y)_I) \notag \\
 &=\,-\Phi(k-1, m-1; w,y) + \sum_{I\in \mltix{m}^k(1)\setminus\mltix{m}^k(1,2)}\!\!\!\det(M(m; w,y)_I).
 \label{E:deflate1} 
\end{align}
This follows by expanding each determinant in the first sum with respect to its first column
and from the fact that, as $3\leq k\leq m-1$, the $(1,1)$-cofactor of each submatrix
$M(m; w,y)_I$, $I\in \mltix{m}^k(1,2)$, has at least one zero-column. As for $\Phi(2,m; w,y)$,
it is easy to see, owing to the structure of $\de{M(m; w,y)}$, that
\begin{equation}\label{E:k=2}
 \Phi(2,m; w,y)\,=\,-w_{n+2-m}+y_1w_{n+1-m}.
\end{equation}

It is possible to simplify the second sum in the equation \eqref{E:deflate1} further. We argue
along the lines described just after \eqref{E:deflate1}: we expand each determinant with respect
to its first column. However, there is a difference in this case. The $(1,1)$-cofactor of
each relevant $M(m; w,y)_I$ will have a zero-column {\em except} when 
$I = (1, m-k+2,\dots, m)$. Note that, as $k\leq m-1$, $m-k+2\neq 2$. The
$(1,1)$-cofactor of $M(m; w,y)_{(1, m-k+2,\dots, m)}$ is the companion matrix
of the polynomial $X^{k-1}-y_1X^{k-2}+\dots+(-1)^{k-2}y_{k-2}X+(-1)^{k-1}y_{k-1}$.
Thus:
\[
 \sum_{I\in \mltix{m}^k(1)\setminus\mltix{m}^k(1,2)}\!\!\!\det(M(m; w,y)_I)\,=\,w_{n+1-m}y_{k-1}.
\]
Combining this with \eqref{E:deflate1}, we get
\begin{equation}\label{E:deflate2}
 \Phi(k,m; w,y)\,=\,-\Phi(k-1, m-1; w,y) + w_{n+1-m}y_{k-1}, \ \ 3\leq k\leq m-1.
\end{equation}

The conclusions of the lemma can easily be established for $n=2, 3$ (we leave it to
the reader to check this). We shall establish the lemma for $n\geq 4$. Recall the definition
of the matrix $\B$. Treating $(Z_1,\dots, Z_{n-1})$ as unknowns, the following:
\begin{equation}\label{E:s_leq}
 \sum_{I\in \mlti^j\,:\,i_1=1}\!\!\det(\B_I)\,=\,x_j, \ \ j=2,\dots,n,
\end{equation}
is a system of $(n-1)$ algebraic equations in $(n-1)$ unknowns.
\smallskip

Observe that the matrix $\B$ is the matrix $M(n; w,y)$ with
$w = (x_1, Z_1,\dots, Z_{n-1})$. Thus, taking $w = (x_1, Z_1,\dots, Z_{n-1})$ 
in \eqref{E:k=2} and \eqref{E:deflate2} and applying \eqref{E:deflate2} recursively, we
see that the system \eqref{E:s_leq} is
a lower-triangular system of linear equations in $(Z_1,\dots, Z_{n-1})$. From the recursion relation
\eqref{E:deflate2}, we get that the coefficient of the unknown $Z_j$ in the $j$-th equation of
\eqref{E:s_leq} (which concerns the sum of the $(j+1)$-st principal minors of $\B$) is $(-1)^j$,
$1\leq j\leq (n-2)$. Finally, expanding $\det(\B)$ along the first {\em row}, we see that the
coefficient of $Z_{n-1}$ in the last equation of \eqref{E:s_leq} is $(-1)^{n-1}$. It follows
from Cramer's rule that each $Z_j$ is a polynomial $p_j$ in $(x,y)$. By our definition of $\B$,
these polynomials, $p_1,\dots, p_{n-1}$, are the required polynomials. The uniqueness
statement follows from the fact that, for a fixed $(x,y)$, the system \eqref{E:s_leq} has
a unique solution
\end{proof}

We continue to follow the notation presented just before the statement of Theorem~\ref{T:equiv}.
Further notation: if $S$ is a square matrix, then $\cmpn{S}$ will denote the companion matrix
of its characteristic polynomial (normalized as in Definition~\ref{D:nonderog}).

\begin{lemma}\label{L:uni_det}
Fix an integer $n\geq 2$, and write any $A\in \cplx^{n\times n}$ as $A = [a_{j,\,k}]$. Define
\begin{align*}
 \geep\,:=\,\big\{A\in \cplx^{n\times n} :\,&\,\text{$\de{A}$ is non-derogatory, and} \\
 &\,\text{$(a_{2,1},\dots, a_{n,1})$ is a cyclic vector of $\de{A}$}\big\}.
\end{align*}
Let $A, B\in \geep$. Suppose $A, B\in \pi_n^{-1}\{(x,y)\}$ for some $(x,y)\in \Cn\times\cplx^{n-1}$.
If $\de{A} = \de{B}$ and  $(a_{2,1},\dots, a_{n,1}) = 
(b_{2,1},\dots, b_{n,1})$, then $(a_{1,2},\dots, a_{1,\,n})=(b_{1,2},\dots, b_{1,\,n})$.
\end{lemma}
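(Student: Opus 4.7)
The plan is to show that, under the given hypotheses, both $A$ and $B$ are conjugate, via a \emph{single} element $G\in\cplx^*\oplus GL_{n-1}(\cplx)$, to the matrix $B(x,y)$ supplied by Lemma~\ref{L:imp}. The conclusion $A=B$ (which entails $(a_{1,2},\dots,a_{1,\,n})=(b_{1,2},\dots,b_{1,\,n})$) is then immediate.

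First I would exploit the cyclicity hypothesis. Set $v:=(a_{2,1},\dots,a_{n,1})^T$ and
\[
\Gamma\,:=\,[\,v\;\;\de{A}v\;\;\de{A}^2v\;\;\cdots\;\;\de{A}^{n-2}v\,].
\]
Since $\de{A}$ is non-derogatory with cyclic vector $v$, the matrix $\Gamma$ lies in $GL_{n-1}(\cplx)$, satisfies $\Gamma^{-1}\de{A}\Gamma=\cmpn{\de{A}}$, and $\Gamma^{-1}v$ equals the first standard basis vector $e_1$ of $\cplx^{n-1}$. Putting $G:=[1]\oplus\Gamma$, a block computation gives
\[
G^{-1}AG\,=\,\begin{bmatrix} a_{1,1} & (a_{1,2},\dots,a_{1,\,n})\Gamma \\ e_1 & \cmpn{\de{A}} \end{bmatrix}.
\]
Now $a_{1,1}=x_1$ (from $\pi_n(A)=(x,y)$), and by \eqref{E:Y} the tuple $(y_1,\dots,y_{n-1})$ is the list of elementary symmetric functions of the eigenvalues of $\de{A}$; the characteristic polynomial of $\de{A}$ is therefore $X^{n-1}-y_1X^{n-2}+\cdots+(-1)^{n-1}y_{n-1}$, and, under the normalization of Definition~\ref{D:nonderog}, $\cmpn{\de{A}}$ coincides with the lower-right $(n-1)\times(n-1)$ block of $B(x,y)$. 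Consequently, $G^{-1}AG$ agrees with $B(x,y)$ in every entry except possibly at positions $(1,2),\dots,(1,\,n)$.

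Next I would combine Lemma~\ref{L:orbit} with the uniqueness statement in Lemma~\ref{L:imp}. By Lemma~\ref{L:orbit}, $\pi_n(G^{-1}AG)=\pi_n(A)=(x,y)$, so the first-row entries $(G^{-1}AG)_{1,2},\dots,(G^{-1}AG)_{1,\,n}$ must satisfy the system
\[
\sum_{I\in\mlti^j,\,i_1=1}\det\bigl((G^{-1}AG)_I\bigr)\,=\,x_j,\qquad j=2,\dots,n.
\]
By Lemma~\ref{L:imp}, $p_1(x,y),\dots,p_{n-1}(x,y)$ are the \emph{unique} numbers solving this system, so those entries are forced to equal $p_j(x,y)$, and hence $G^{-1}AG=B(x,y)$.

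Finally, because $\de{B}=\de{A}$ and the first column of $B$ below the $(1,1)$-entry is also $v$, the same $\Gamma$ (built from these common data) conjugates $\de{B}$ to $\cmpn{\de{B}}=\cmpn{\de{A}}$ and sends $v$ to $e_1$; the same $G$ therefore works for $B$, and the previous paragraph applies verbatim to yield $G^{-1}BG=B(x,y)$. Hence $A=B$. The only genuinely substantial step will be the recognition that Lemma~\ref{L:imp} was engineered precisely to supply the uniqueness needed once this normal-form reduction has been carried out; granted that observation, the remainder of the argument is structural.
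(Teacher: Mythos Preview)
Your argument is correct and follows essentially the same route as the paper: build $\Gamma$ from the cyclic vector, conjugate by $G=[1]\oplus\Gamma$ to land on a matrix of the shape $B(x,y)$ modulo its first row, invoke Lemma~\ref{L:orbit} and then the uniqueness in Lemma~\ref{L:imp} to pin down that row, and finally observe that the same $\Gamma$ works for $B$. The paper stops at equality of the first rows (which is the asserted conclusion), whereas you go one step further and note $A=B$; that is a harmless strengthening given the standing hypotheses $\de{A}=\de{B}$, equal first columns, and $a_{1,1}=b_{1,1}=x_1$.
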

\begin{remark}\label{Rem:column}
In the proof of the above lemma\,---\,as elsewhere in this
article\,---\,a vector in $\cplx^k$, 
$1\leq k <\infty$, will also be treated (without any change in notation) as a $k\times 1$ complex matrix.
\end{remark}
\begin{proof}
By assumption, $\de{A}$ is non-derogatory. It is well-known that any matrix $G\in GL_{n-1}(\cplx)$
such that $G^{-1}\de{A}G = \cmpn{\de{A}}$ must be of the form.
\[
 G\,=\,[\cyc{} \ \ \de{A}\cyc{}\,\dots\,(\de{A})^{n-2}\cyc{}],
\]
where $\cyc{}$ is some cyclic vector of $\de{A}$. Thus, the matrix
\begin{equation}\label{E:gamma}
 \Gamma\,:=\,[\col \ \ \de{A}\col{}\,\dots\,(\de{A})^{n-2}\col{}],
\end{equation}
where $\col:=[a_{2,1}\dots a_{n,1}]^T$, is the {\em unique} element
in $GL_{n-1}(\cplx)$ with the two properties
\begin{align*}
 \Gamma^{-1}\de{A}\,\Gamma\,&=\,\cmpn{\de{A}}, \\
 \Gamma\,[1 \  0,\,\dots\,0]^T\,&=\,\col.
\end{align*}
We will denote elements $X\in \cplx^*\oplus GL_{n-1}(\cplx)$ using the
abbreviated notation introduced in the proof of Lemma~\ref{L:orbit}. By what we have
just discussed:
\[
 (1\oplus\Gamma)^{-1}A
 (1\oplus\Gamma)\,=\,\begin{bmatrix}
 					\ a_{1,1}	&\vline & \ [a_{1,2} \ \ a_{1,3}\,\dots\,a_{1,\,n}]\Gamma \ \\ \hline
 					\ 1		&\vline & { } \\
 					\ 0		&\vline & { } \\
 					\ \vdots	&\vline & \cmpn{\de{A}} \\
 					\ 0		&\vline & { }
 					\end{bmatrix}.
\]
Call the above matrix $\widehat{A}$. By Lemma~\ref{L:orbit}, $\widehat{A}\in \pi_n^{-1}\{(x,y)\}$.
Thus, by Lemma~\ref{L:imp}, it follows\,---\,compare the matrix above with the matrix
$B(x,y)$ in Lemma~\ref{L:imp}\,---\,that
\[
 [a_{1,2}, \ \ a_{1,3}\,\dots\,a_{1,\,n}]\,=\,[p_1(x,y) \ \ p_2(x,y)\,\dots\,p_{n-1}(x,y)]\Gamma^{-1}.
\]
\vskip1mm

However, the argument above applies to $B$ as well, and as $\de{A} = \de{B}$ and
$(a_{2,1},\dots, a_{n,1}) = (b_{2,1},\dots, b_{n,1})$, the matrix $\Gamma$ given by
\eqref{E:gamma} works for $B$ as well. And as $A, B\in \pi_n^{-1}\{(x,y)\}$, we can conclude that
\begin{align*}
 [a_{1,2}, \ \ a_{1,3}\,\dots\,a_{1,\,n}]\,&=\,[p_1(x,y) \ \ p_2(x,y)\,\dots\,p_{n-1}(x,y)]\Gamma^{-1} \\
 								     &=\,[b_{1,2}, \ \ b_{1,3}\,\dots\,b_{1,\,n}].
\end{align*}
\end{proof}

\section{Two Characterizations of $\quotn{n}$}\label{S:char}

As hinted in Section~\ref{S:intro}, Theorem~\ref{T:nec} follows from a certain
characterization of $\quotn{n}$. This characterization is the focus of this section.
We begin with a proposition that explains the origins of the (somewhat odd-looking)
sets $\quotn{n}$. Readers familiar with
\cite{abouWhYoung:sldrmu-s07} will notice that the following proposition is
a generalization of \cite[Theorem~9.1]{abouWhYoung:sldrmu-s07}.

\begin{proposition}\label{P:mu-to-quot}
A point $(x,y)\in \Cn\times \cplx^{n-1}$ belongs to $\quotn{n}$ if
and only if there exists a matrix $A\in \OM_{1,\,n}$ such that
$\pi_n(A) = (x,y)$. Furthermore, if $(x,y)\in \quotn{n}$, then the matrix $B(x,y)$ defined
in the statement of Lemma~\ref{L:imp} belongs to $\OM_{1,\,n}$. 
\end{proposition}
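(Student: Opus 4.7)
The approach hinges on Lemma~\ref{L:det}. The plan is to apply \eqref{E:key-det} with the specific substitution $(z_1,z_2,\dots,z_n) = (w,z,\dots,z)$, i.e.\ to the diagonal matrix of the form dictated by the structured subspace $E^{1,\,n}$. Splitting the sum $\sum_{I\in\mlti^j}\det(A_I)z^I$ according to whether $i_1 = 1$ (in which case $z^I = w z^{j-1}$) or $i_1\geq 2$ (in which case $z^I = z^j$), and noting that the resulting two families of coefficients are precisely the $X$- and $Y$-components of $\pi_n = (X,Y)$, we obtain the key identity
\[
 \det\!\left(\id_n - A\,\dg(w,z,\dots,z)\right)\,=\,Q_n(z;Y(A))\,-\,w\,P_n(z;X(A)).
\]

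Next, I would invoke the definition of $\mu_{E^{1,\,n}}$: for any $X = [w]\oplus(z\id_{n-1})\in E^{1,\,n}$ one has $\|X\| = \max(|w|,|z|)$, so $A\in \OM_{1,\,n}$ if and only if $\det(\id_n - AX)\neq 0$ for every such $X$ with $(z,w)\in \overline{\Dee}^2$. Combined with the identity above, this is precisely the condition that the zero set of $Q_n(\bcdot\,;Y(A)) - w P_n(\bcdot\,;X(A))$ misses $\overline{\Dee}^2$, i.e.\ that $\pi_n(A)\in \quotn{n}$. This gives the ``only if'' direction of the equivalence for free, and reduces both the ``if'' direction and the ``furthermore'' clause to the single assertion that, for any $(x,y)\in \quotn{n}$, the matrix $B(x,y)$ from Lemma~\ref{L:imp} satisfies $\pi_n(B(x,y)) = (x,y)$.

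To verify this last assertion I would argue component-wise. The $X$-part of $\pi_n(B(x,y))$ equals $(x_1,\dots,x_n)$: the $(1,1)$-entry is $x_1$ by construction, and Lemma~\ref{L:imp} is precisely the statement that $\sum_{I\in\mlti^j,\,i_1=1}\!\det(B(x,y)_I)= x_j$ for $j=2,\dots,n$. For the $Y$-part, I would read off the structure of $\de{B(x,y)}$: it is a companion-type $(n-1)\times(n-1)$ matrix whose characteristic polynomial is
\[
 \lambda^{n-1} - y_1\lambda^{n-2} + y_2\lambda^{n-3} - \cdots + (-1)^{n-1}y_{n-1},
\]
so that the $j$-th elementary symmetric polynomial in its eigenvalues is exactly $y_j$. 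Since $\sum_{I\in\mlti^j,\,i_1\geq 2}\!\det(B(x,y)_I) = \esym_{n-1,\,j}(\sigma(\de{B(x,y)}))$, this yields $Y(B(x,y)) = y$.

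Combining these pieces: when $(x,y)\in \quotn{n}$, setting $A := B(x,y)$ gives $\pi_n(A) = (x,y)$ and, by the equivalence established in the second paragraph, $A\in \OM_{1,\,n}$. The main obstacle is bookkeeping, chiefly the splitting-by-$(i_1 = 1)$ step in the first paragraph; once the identity
$\det(\id_n - A\,\dg(w,z,\dots,z)) = Q_n(z;Y(A)) - w\,P_n(z;X(A))$
is in hand, the proposition follows almost immediately from Lemma~\ref{L:imp} and the companion-matrix calculation.
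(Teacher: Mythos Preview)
Your approach is essentially the same as the paper's: both specialize Lemma~\ref{L:det} to $(z_1,\dots,z_n)=(w,z,\dots,z)$ to obtain the identity $\det(\id_n - A\,\dg(w,z,\dots,z)) = Q_n(z;Y(A)) - wP_n(z;X(A))$, then use Lemma~\ref{L:imp} together with the companion-matrix structure of $\de{B(x,y)}$ to verify $\pi_n(B(x,y))=(x,y)$.

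One small point worth tightening: in your second paragraph you assert that $A\in\OM_{1,\,n}$ \emph{if and only if} $\det(\id_n-AX)\neq 0$ for all $X\in E^{1,\,n}$ with $(z,w)\in\overline{\Dee}^2$. The forward implication is immediate from the definition of $\mu_{E^{1,\,n}}$, but the reverse needs a word: from $\det(\id_n-AX)\neq 0$ on $\overline{\Dee}^2$ one must conclude that the infimum defining $\mu_{E^{1,\,n}}(A)^{-1}$ is strictly greater than $1$, not merely $\geq 1$. The paper handles this by observing that the zero set is closed and $\overline{\Dee}^2$ is compact, so disjointness persists on $((1+\eps_0)\Dee)^2$ for some $\eps_0>0$, giving $\mu_{E^{1,\,n}}(A)\leq 1/(1+\eps_0)<1$. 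Once you insert that compactness step, your argument is complete and matches the paper's.
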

\begin{remark}
The first part of the above is, essentially, part~(2) of Theorem~\ref{T:equiv}.
\end{remark}
\begin{proof}
Let $E^{1,\,n}$ be as in \eqref{E:struct}. Given $r > 0$ and a matrix $A\in \cplx^{n\times n}$,
$\mu_{E^{1,\,n}}(A)\leq 1/r$ if and only if for, any matrix $M\in E^{1,\,n}$ that satisfies
\[
 \det(\id - AM)\,=\,0,
\]
$\|M\|\geq r$. Let us write $\pi_n$ as $(X,Y): \cplx^{n\times n}\lrarw \Cn\times\cplx^{n-1}$.
It follows from Lemma~\ref{L:det} that if the $M$ above is written as
$M = [w]\oplus(z\id_{n-1})$, then
\begin{equation}\label{E:poly}
  \det(\id - AM)\,=\,\left(1+\sum\nolimits^{n-1}_{j=1}(-1)^j Y_j(A) z^j\right) -
  w\left(\sum\nolimits^{n-1}_{j=0}(-1)^j X_{j+1}(A)z^j\right).
\end{equation}
The preceding discussion is summarized as follows:
\smallskip

\begin{itemize}
 \item[(\textbullet)] $\mu_{E^{1,\,n}}(A)\leq 1/r$, $r > 0$, if and only if the
 zero set of the polynomial on the right-hand side of \eqref{E:poly}
 is disjoint from $(r\Dee)^2$.
\end{itemize}

Now, suppose $A\in \OM_{1,\,n}$. Then there exists an $r_0 > 1$ such that 
$\mu_{E^{1,\,n}}(A)\leq 1/r_0$. It follows from (\textbullet) that the zero
set of the polynomial on the right-hand side of \eqref{E:poly}
is disjoint from $(r_0\Dee)^2$, whence it is disjoint from $\overline{\Dee}^2$.
Thus $(X, Y)(A) = \pi_n(A)\in \quotn{n}$.
\smallskip

Let $(x,y)\in \quotn{n}$. Let $p_1,\dots, p_{n-1}$ be the polynomials
provided by Lemma~\ref{L:imp} and let $A$ be the matrix $B(x,y)$ defined
in Lemma~\ref{L:imp}. Since $\de{A}$ is a companion matrix, it
follows by examination of its last column that $Y(A) = (y_1,\dots, y_{n-1})$. Thus, from
the definition of $\pi_n$ and by Lemma~\ref{L:imp}, we have
\begin{equation}\label{E:key-eq}
  \pi_n(A) =(x,y).
\end{equation}  
As $(x,y)\in \quotn{n}$, it follows that there exists a
small positive constant $\eps_0$ such that the zero set of the polynomial
\[
 \left(1+\sum\nolimits^{n-1}_{j=1}(-1)^j y_j z^j\right)
 - w\left(\sum\nolimits^{n-1}_{j=0}(-1)^j x_{j+1}z^j\right)
\]
is disjoint from $((1+\eps_0)\Dee)^2$. From (\textbullet) and
\eqref{E:key-eq}, we have $\mu_{E^{1,\,n}}(A)\leq 1/(1+\eps_0) < 1$.
This completes the proof.
\end{proof}

The first theorem of this section is a consequence of Proposition~\ref{P:mu-to-quot}. In order to
state it, we need a definition. Fix an integer $n\geq 2$ and let $(x,y)\in \Cn\times \cplx^{n-1}$.
Let $P_n(\bcdot\,;x)$ and $Q_n(\bcdot\,;y)$ be the polynomials defined just prior to
Theorem~\ref{T:nec}, and define
\[
 \res(x,y)\,:=\,{\sf Res}(P_n(\bcdot\,;x), Q_n(\bcdot\,;y)),
\]
where ${\sf Res}$ denotes the resultant of a pair of univariate polynomials.

\begin{theorem}\label{T:char1}
Fix an integer $n\geq 2$, and, for $(x,y)\in \Cn\times \cplx^{n-1}$, let
$\vP(\bcdot\,;x,y)$ be the rational function defined in Section~\ref{S:intro}.
The point $(x,y)\in \quotn{n}$ if and only if the following two
conditions are satisfied:
\begin{itemize}
 \item[(I)] $\left.\vP(\bcdot\,; x,y)\right|_{\cDee} \in \hol(\Dee)\cap \smoo(\cDee)$, and
 \[
  \sup_{z\in \bdy}|\vP(z; x,y)|\,<\,1;
 \]
 \item[(II)] If $\res(x,y) = 0$, then every common zero of $P_n(\bcdot\,;x)$ and
 $Q_n(\bcdot\,;y)$ lies outside $\cDee$.
\end{itemize}
\end{theorem}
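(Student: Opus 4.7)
The plan is to work directly from the definition of $\quotn{n}$ given in Section~\ref{S:intro}: $(x,y)\in \quotn{n}$ iff the polynomial $Q_n(z;y) - w P_n(z;x)$ has no zero in $\cDee^2$. Both implications will be established by exhibiting a contradictory witness $(z_0,w_0)\in \cDee^2$ in that zero set. For the forward direction, assume $(x,y)\in \quotn{n}$. For any $z_0\in \cDee$, the pair $(z_0,0)$ must fail to lie in the zero set; this forces two conclusions: no common zero of $P_n(\cdot;x)$ and $Q_n(\cdot;y)$ can lie in $\cDee$ (since any such $z_0$ together with \emph{any} $w\in \cDee$ would give a forbidden point), which is precisely (II); and $Q_n(\cdot;y)$ has no zero in $\cDee$ that is not shared with $P_n(\cdot;x)$ (via the witness $(z_0,0)$). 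Together these show $Q_n(\cdot;y)$ is zero-free on $\cDee$, so $\vP(\cdot;x,y) = P_n(\cdot;x)/Q_n(\cdot;y)$ is holomorphic in a neighbourhood of $\cDee$, giving the regularity part of (I). For the strict bound, suppose $|\vP(z_0;x,y)|\geq 1$ at some $z_0\in \bdy$ (attained, if the sup were $\geq 1$, by compactness of $\bdy$); then $P_n(z_0;x)\neq 0$, and $w_0 := Q_n(z_0;y)/P_n(z_0;x)$ lies in $\cDee$ with $(z_0,w_0)$ in the zero set, a contradiction.

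For the converse, assume (I) and (II) and suppose toward a contradiction that $(z_0,w_0)\in \cDee^2$ satisfies $Q_n(z_0;y) - w_0 P_n(z_0;x) = 0$. I would split on whether $Q_n(z_0;y)$ vanishes. If $Q_n(z_0;y)\neq 0$, then $P_n(z_0;x)\neq 0$, so $z_0$ is not a common zero, the common-factor cancellation defining $\vP$ leaves the value at $z_0$ intact, and hence $|\vP(z_0;x,y)| = 1/|w_0|\geq 1$; this contradicts the maximum-modulus consequence of (I), $\sup_{\cDee}|\vP(\cdot;x,y)| = \sup_{\bdy}|\vP(\cdot;x,y)| < 1$. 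If $Q_n(z_0;y) = 0$, then $w_0 P_n(z_0;x) = 0$; in the event $P_n(z_0;x) = 0$, the point $z_0\in \cDee$ is a common zero, forcing $\res(x,y) = 0$ and violating (II); in the event $P_n(z_0;x)\neq 0$, the factor $(z-z_0)$ of $Q_n(\cdot;y)$ is absent from $P_n(\cdot;x)$ and therefore survives the cancellation in the definition of $\vP$, producing an honest pole of $\vP(\cdot;x,y)$ at $z_0\in \cDee$, which contradicts the holomorphicity/smoothness clause of (I).

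The only real bookkeeping lies in the last subcase: one must notice that the common linear factors cancelled in the definition of $\vP$ correspond exactly to common zeros of $P_n(\cdot;x)$ and $Q_n(\cdot;y)$, so a zero of $Q_n(\cdot;y)$ at a non-zero of $P_n(\cdot;x)$ cannot be cancelled, which is what legitimises the pole conclusion and keeps the case analysis honest. Beyond this, the proof is a clean case-split against the defining property of $\quotn{n}$ combined with the maximum modulus principle.
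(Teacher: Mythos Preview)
Your proof is correct and takes essentially the same approach as the paper's: both argue by case analysis on the zero set of $Q_n(z;y) - wP_n(z;x)$ over $\cDee^2$ and appeal to the Maximum Modulus Theorem. The paper packages the argument via an intermediate characterization (its equation \eqref{E:almost-char}) and an auxiliary polynomial $wQ_n - P_n$, whereas your direct case-split covers the same ground a bit more economically.
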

\begin{proof}
In this proof, for any polynomial $p\in \cplx[z,w]$, $\zer(p)$ will denote its zero set in
$\cplx^2$. Let us fix $(x,y)\in \Cn\times \cplx^{n-1}$ and write:
\[
 \pee(z,w; x,y)\,:=\,Q_n(z; y) - wP_n(z; x).
\]
We will begin with some basic observations. First:
\begin{align}
 z_0 \ \text{is a common zero of $P_n(\bcdot\,;x)$ and
 $Q_n(\bcdot\,;y)$}\,\iff\,\{z_0\}\times\cplx\subset\,&\zer(\pee(\bcdot\,; x,y)), \label{E:vert-z} \\
 P_n(z_0; x) = 0 \ \text{and} \ Q_n(z_0; y)\neq 0\,\iff\,(\{z_0\}\times\cplx)\,\cap\,&\zer(\pee(\bcdot\,; x,y))
 = \varnothing. \label{E:when-null}
\end{align}
Secondly: in view of \eqref{E:vert-z} and \eqref{E:when-null}, it follows that for any $z_0\in \cplx$:
\begin{multline}\label{E:z-struc}
 (\{z_0\}\times\cplx)\cap\zer(\pee(\bcdot\,; x,y)) \subset
 \{z_0\}\times(\cplx\setminus\cDee) \\
   \impl\,\begin{cases}
 		\{z_0\} \ \text{is not a common zero of $P_n(\bcdot\,;x)$ and $Q_n(\bcdot\,;y)$},  \\
 		Q_n(z_0; y)\neq 0, \ \text{and} \\
 		(\{z_0\}\times\cplx)\cap\zer(\npee(\bcdot\,; x,y))\subset \{z_0\}\times\Dee,
 		\end{cases} 
\end{multline}
where the polynomial $\npee(\bcdot\;x,y)$ is defined by
\[
 \npee(z,w; x,y)\,:=\,wQ_n(z; y) - P_n(z; x).
\]

\noindent{{\bf Claim.} {\em For any $z_0$, the converse of \eqref{E:z-struc} holds true.}}

\noindent{To see this, let us abbreviate the statement \eqref{E:z-struc} as
$\mathcal{P}(z_0)\impl \mathcal{Q}(z_0)$.  
Now fix a $z_0$ and suppose that it satisfies the three conditions in $\mathcal{Q}(z_0)$.
If $P(z_0; x) = 0$, then by \eqref{E:when-null} $\mathcal{P}(z_0)$ is vacuously true.
Hence, let us assume that $P_n(z_0; x)\neq 0$. Then:
\[
 (\{z_0\}\times\cplx)\cap\zer(\npee(\bcdot\,; x,y))\,=\,\left\{(z_0,
 P_n(z_0; x)/Q_n(z_0; y))\right\}\,\equiv\,\{(z_0, w_0)\}
\]
and, by assumption, $0 < |w_0| < 1$.
Thus $(\{z_0\}\times\cplx)\cap\zer(\pee(\bcdot\,; x,y)) = \{(z_0, 1/w_0)\}
\subset  \{z_0\}\times(\cplx\setminus\cDee)$. This establishes the claim.}
\smallskip

The condition for membership of $(x,y)$ in $\quotn{n}$ can be stated as:
\[
 (x,y)\in \quotn{n}\,\iff\,\text{for each $z\in \cDee$}, \ 
 (\{z\}\times\cplx)\cap\zer(\pee(\bcdot\,; x,y))\subset \{z\}\times(\cplx\setminus\cDee).
\]
In view of \eqref{E:vert-z}, \eqref{E:z-struc} and its converse, and \eqref{E:when-null},
the above statement is rephrased as:
\begin{align}
 (x,y)\in \quotn{n}\,\iff\,&\text{for each $z\in \cDee$}, \notag \\
 &\text{$z$ is not a common zero of $P_n(\bcdot\,; x)$ and $Q_n(\bcdot\,; y)$}, \notag \\
 &\text{$Q_n(z; y)\neq 0$, and $|P_n(z; x)/Q_n(z; y)| < 1$.} \label{E:almost-char}
\end{align}

Finally, we make use the following two facts. First: for any fixed $(x,y)$, the
polynomials $P_n(\bcdot\,; x)$ and $Q_n(\bcdot\,; y)$ have a common zero
if and only if $\res(x,y) = 0$\,---\,see, for instance, \cite{vanderWaerden:Av.I70}.
Second: since $\vP(\bcdot\,; x,y)$ (as defined in
Section~\ref{S:intro}) is a {\bf rational} function,
\[
 \vP(\bcdot\,; x,y)\in \smoo(\cDee)\,\iff\,\text{$\vP(\bcdot\,; x,y)$ is bounded on $\Dee$.}
\]
In view of these two facts, the theorem follows from \eqref{E:almost-char} after an
application of the Maximum Modulus Theorem.
\end{proof}

For our next theorem we shall need the following result by Costara:

\begin{result}[Costara, \cite{costara:osNPp05}, Corollary~3.4]\label{R:costara}
For any $(s_1,\dots, s_n)\in \Cn$, $n\geq 2$, the following assertions are equivalent:
\begin{enumerate}[$(i)$]
 \item The element $(s_1,\dots, s_n)$ belongs to the symmetrized polydisc $\sympd{n}$.
 \item For each $z\in \cDee$, $(\ess_1(z),\dots, \ess_{n-1}(z))\in \sympd{n-1}$, where
 \[
  \ess_j(z)\,:=\,n^{-1}\frac{(n-j)s_j - (j+1)zs_{j+1}}{1 - n^{-1}zs_1}, \ \ j = 1,\dots, n-1.
  \]
\end{enumerate}
\end{result}

\noindent{As in \cite{costara:osNPp05}, implicit in the phrase ``$(s_1,\dots,s_n)\in  \sympd{n}$''
is the sign-convention of the definition:}
\[
\sympd{n}:=\,\left\{(s_1,\dots,s_n)\in\Cn: \text{the roots of} \  
z^n+\sum\nolimits_{j=1}^n(-1)^js_jz^{n-j}=0 \ \text{lie in $\Dee$}\right\}, \
n\in \zahl_+.
\]

\begin{theorem}\label{T:char2}
For any $(x, y)\in \Cn\times \cplx^{n-1}$, $n\geq 3$, the following assertions are
equivalent:
\begin{enumerate}[$(i)$]
 \item The point $(x, y)$ belongs to the $\mu_{1,\,n}$-quotient $\quotn{n}$.
 \item For each $\xi\in \cDee$, the point $(\exx(\xi), \aye(\xi))\in \quotn{n-1}$, where
 \begin{align*}
  \exx_j(\xi)\,&:=\,\frac{(n-j)x_j - j\xi x_{j+1}}{(n-1) - \xi y_1}, \ \ j = 1,\dots, n-1, \\
  \aye_j(\xi)\,&:=\,\frac{(n-1-j)y_j-(j+1)\xi y_{j+1}}{(n-1) - \xi y_1}, \ \ j = 1,\dots, n-2.
 \end{align*}
\end{enumerate}
\end{theorem}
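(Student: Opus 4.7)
The plan is to realize the map $(x, y) \mapsto (\exx(\xi), \aye(\xi))$ as a $z$-polar derivative applied to the defining polynomial $p(z, w) := Q_n(z; y) - wP_n(z; x)$ of $\quotn{n}$, and then to derive the equivalence from this identity together with \textbf{Laguerre's theorem} on polar derivatives. Throughout, set $D(\xi) := (n-1) - \xi y_1$ and, for each $\xi$, $\tilde{p}(z, w; \xi) := Q_{n-1}(z; \aye(\xi)) - w P_{n-1}(z; \exx(\xi))$. The key identity, valid for all $\xi, z, w \in \cplx$, is
\begin{equation*}
D(\xi)\,\tilde{p}(z, w; \xi) \,=\, (n-1)\,p(z, w) + (\xi - z)\,\partial_z p(z, w),
\end{equation*}
and it can be checked by a routine coefficient-by-coefficient match, splitting $(n-j)x_j - j\xi x_{j+1}$ (and its $y$-analog) according to the patterns of $(n-1)P_n - zP_n'$ and $-P_n'$. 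The content of this identity is that its right-hand side is precisely the classical polar derivative $(n-1)p(z, w) + (\xi - z) p'(z, w)$ of $p(\cdot, w)$ at $\xi$, when $p(\cdot, w)$ is regarded as a polynomial of formal degree $n-1$ in $z$.

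\emph{$(ii) \Rightarrow (i)$.} Substitute $z = \xi$ in the identity: the derivative term drops, leaving $(n-1)p(\xi, w) = D(\xi)\tilde{p}(\xi, w; \xi)$. Under $(ii)$, the quantities $\exx(\xi), \aye(\xi)$ are defined, forcing $D(\xi) \neq 0$, and $\tilde{p}(\xi, w; \xi) \neq 0$ for every $w \in \cDee$ since $(\exx(\xi), \aye(\xi)) \in \quotn{n-1}$. Varying $\xi \in \cDee$ gives $p(z, w) \neq 0$ for all $(z, w) \in \cDee^2$, i.e., $(x, y) \in \quotn{n}$.

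\emph{$(i) \Rightarrow (ii)$.} First I verify $D(\xi) \neq 0$ on $\cDee$ so that $\exx(\xi), \aye(\xi)$ are meaningful: setting $w = 0$ in $(i)$ gives $Q_n(\cdot\,; y) \neq 0$ on $\cDee$, and since $Q_n(0; y) = 1$, I factor $Q_n(z; y) = \prod_{i=1}^d(1 - z/\alpha_i)$ with $d \leq n-1$ and each $|\alpha_i| > 1$; comparing the coefficient of $z$ yields $|y_1| \leq \sum_i|1/\alpha_i| < d \leq n-1$, hence $D(\xi) \neq 0$ on $\cDee$. Next, fix $\xi, w \in \cDee$. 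The polynomial $z \mapsto p(z, w)$ is nonzero (else $p(0, w) = 1 - wx_1 = 0$ would place a zero of the defining polynomial of $\quotn{n}$ at $(0, w) \in \cDee^2$, contradicting $(i)$) and, by $(i)$ together with the finiteness of its zero set, all its finite zeros lie in $\{|z| \geq r\}$ for some $r > 1$. I then invoke Laguerre's theorem in its Riemann-sphere form, applied to the closed circular region $C := \{|z| \geq r\} \cup \{\infty\}$ on the Riemann sphere: $C$ contains every finite zero of $p(\cdot, w)$ together with any ``zeros at $\infty$'' that accommodate a shortfall between the actual $z$-degree of $p(\cdot, w)$ and the formal degree $n-1$, and $\xi \notin C$ since $|\xi| \leq 1 < r$. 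The theorem thus yields that $(n-1)p(z, w) + (\xi - z)\partial_z p(z, w)$ has all its finite zeros in $\{|z| \geq r\}$, hence none in $\cDee$. Combined with the identity and $D(\xi) \neq 0$, $\tilde{p}(z, w; \xi) \neq 0$ for $z \in \cDee$; letting $w$ range over $\cDee$ gives $(\exx(\xi), \aye(\xi)) \in \quotn{n-1}$.

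The main obstacle is the precise invocation of Laguerre's theorem in the second half. Its textbook form assumes the polynomial has exactly the declared degree and the pole-point lies strictly outside the closed circular region. Both subtleties must be addressed: the formal-vs.-actual degree discrepancy for $p(\cdot, w)$ is absorbed by working on the Riemann sphere, with missing zeros placed at $\infty \in C$; and the case $\xi \in \bdy$ is handled not through $|\xi| < 1$ but through the strict separation $|\xi| \leq 1 < r$, which is always available because the finite zero set of the (nonzero) polynomial $p(\cdot, w)$ is compact.
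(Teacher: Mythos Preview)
Your proof is correct and takes a genuinely different route from the paper's. The paper first establishes the bridge
\[
(x,y)\in\quotn{N}\ \Longleftrightarrow\ \left(\frac{y_1-wx_2}{1-wx_1},\dots,\frac{y_{N-1}-wx_N}{1-wx_1}\right)\in\sympd{N-1}\ \text{for all }w\in\cDee,
\]
applies Costara's recursive characterization of $\sympd{n-1}$ (Result~\ref{R:costara}) to drop one dimension, and then algebraically repackages the resulting $\sympd{n-2}$-condition back into an $\quotn{n-1}$-condition via the same bridge with $N=n-1$. You instead recognize directly that $D(\xi)\,\tilde{p}(z,w;\xi)$ is the polar derivative of $p(\,\cdot\,,w)$ (formal $z$-degree $n-1$) at $\xi$, and appeal to Laguerre's theorem on the exterior disc $\{|z|\geq r\}\cup\{\infty\}$. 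Since Costara's own recursion for $\sympd{n}$ is itself proved by exactly this polar-derivative mechanism, your argument is effectively the paper's argument with the intermediate symmetrized-polydisc layer stripped away; it is shorter and self-contained, at the cost of not exhibiting the $\quotn{N}\leftrightarrow\sympd{N-1}$ link, which the paper exploits elsewhere (e.g.\ in Remark~\ref{Rem:DOM}). Your handling of the two delicate points---the formal-versus-actual degree of $p(\,\cdot\,,w)$ via zeros at $\infty\in C$, and the boundary case $|\xi|=1$ via the strict inequality $1<r$ coming from the finiteness of the zero set---is correct; for a reference that states Laguerre's theorem in the required Riemann-sphere/formal-degree generality you might cite Marden's \emph{Geometry of Polynomials} or Rahman--Schmeisser.
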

\begin{proof}
Fix an integer $N\geq 2$ (this $N$ is unrelated to the $n$ in the theorem above).
For $(x,y)\in \cplx^N\times\cplx^{N-1}$, let $P_N(\bcdot\,; x)$ and
$Q_N(\bcdot\,; y)$ be as in the proof of the previous theorem.
Note that the following statements are equivalent:
\begin{enumerate}[$a)$]
 \item The point $(x,y)\in \cplx^N\times \cplx^{N-1}$ belongs to $\quotn{N}$.
 \item For each fixed $w\in \cDee$, the zeros of the polynomial $(Q_N(z; y) - wP_N(z; x))$
 lie in $(\cplx\setminus\cDee)$.
 \item For each fixed $w\in \cDee$, the zeros of the polynomial $(Q_N(z; y) - wP_N(z; x))$
 lie in $(\cplx\setminus\cDee)$ and $(1 - wx_1)\neq 0$.
 \item For each fixed $w\in \cDee$, the zeros of the polynomial
 \[
  \frac{z^{N-1}}{1-wx_1}\left(Q_N\left(\frac{1}{z}; y\right) -
  wP_N\left(\frac{1}{z}; x\right)\right)\,=\,z^{N-1} + \sum\noindent_{j=1}^{N-1}(-1)^j
  								    \frac{y_j - wx_{j+1}}{1 - wx_1}z^{N-(j+1)}
 \]
 lie in $\Dee$.
 \item For each fixed $w\in \cDee$,
 \begin{equation}\label{E:quotn-to-sympd}
  \left(\frac{y_1-wx_2}{1 - wx_1},\dots,\frac{y_{N-1}-wx_N}{1 - wx_1}\right)\in \sympd{N-1}.
 \end{equation}
\end{enumerate}
Except, perhaps, for the implication $(b)\!\Rightarrow\!(c)$, it is either self-evident or follows from
definitions that each statement in the above list is equivalent to the one that follows it. 
As for the implication $(b)\!\Rightarrow\!(c)$: it follows from $(b)$ that if the polynomial in $(b)$ is
nonconstant, then the product of its zeros must be non zero, and if it is constant (for a
fixed $w\in \cDee$), then this constant must be non-zero. In either case, this gives
$(1-wx_1)\neq 0$.
\smallskip

Now consider $n\geq 3$ as given.
From the equivalence $(a)\!\iff\!(e)$ with $N = n$, and from Costara's theorem, we get:
\begin{itemize}
 \item[$(\blacktriangle)$] The point $(x,y)\in \quotn{n}$$\iff$for each $(w,\xi)\in (\cDee)^2$,
 $(\ess_1(\xi, w; x,y),\dots,\ess_{n-2}(\xi, w; x,y))$ belongs to $\sympd{n-2}$, where
 \begin{multline*}
  \ess_j(\xi, w; x,y)\,:=\,\frac{\left(\dfrac{n-j-1}{n-1}\right)\left(\dfrac{y_j - wx_{j+1}}{1 - wx_1}\right) - 
 						 \xi\,\left(\dfrac{j+1}{n-1}\right)\left(\dfrac{y_{j+1} - wx_{j+2}}{1 - wx_1}\right)}
 						 {1-\left(\dfrac{\xi}{n-1}\right)\left(\dfrac{y_1-wx_2}{1 - wx_1}\right)}, \\
  j = 1,\dots,n-2.
 \end{multline*}
\end{itemize}
Observe that the expressions for $\ess_j(\xi, w; x,y)$ can be rewritten as
\begin{multline}\label{E:esss}
 \ess_j(\xi, w; x,y)\,:=\,\frac{\left[\dfrac{(n\!-\!1\!-\!j)y_j-(j\!+\!1)\xi y_{j+1}}{(n\!-\!1) - \xi y_1}\right] -
 						w\left[\dfrac{(n\!-\!j\!-\!1)x_{j+1} - (j\!+\!1)\xi x_{j+2}}{(n-1) - \xi y_1}\right]}
 						{1 - w\left[\dfrac{(n\!-\!1)x_{1} - \xi x_{2}}{(n\!-\!1) - \xi y_1}\right]}, \\
 j = 1,\dots,n-2.
\end{multline} 

For $N\geq 2$, it follows from the equivalence $(a)\!\iff\!(e)$ that we established
above that (just take $w = 0$ in \eqref{E:quotn-to-sympd})
\[
 (x,y)\in \quotn{N}\,\impl\,(y_1,\dots,y_{N-1})\in \sympd{N-1}\,\impl\,|y_1| < N-1.
\]
From this, we get
\begin{equation}\label{E:auxil}
 (x,y)\in \quotn{N}\,\iff\,\big((x,y)\in \quotn{N} \ \; \text{and, for each 
 $w\in \cDee$, $(1 - wy_1)\neq 0$}\big).
\end{equation}

We now apply the equivalence $(a)\!\iff\!(e)$ taking $N = (n-1)$ (which is valid,
since, by hypothesis, $(n-1)\geq 2$). From \eqref{E:auxil}, the equivalence $(\blacktriangle)$, and by
comparing \eqref{E:esss} with \eqref{E:quotn-to-sympd}, we see that 
for any $(x,y)\in \Cn\times\cplx^{n-1}$, each assertion
in the list below is equivalent to the one following it:
\begin{enumerate}[A)]
 \item The point $(x,y)\in \Cn\times \cplx^{n-1}$ belongs to $\quotn{n}$.
 \item The point $(x,y)\in \Cn\times \cplx^{n-1}$ belongs to $\quotn{n}$ and,
 for each $w\in \cDee$, $(1 - wy_1)\neq 0$.
 \item For each $w\in \cDee$, $(1 - wy_1)\neq 0$ and, for each $\xi\in \cDee$,
 $(\ess_1(\xi, w; x,y),\dots,\ess_{n-2}(\xi, w; x,y))$ belongs to $\sympd{n-2}$, where
 $\ess_j(\xi, w; x,y)$, $j = 1, \dots, (n-2)$, is given by $(\blacktriangle)$.
 \item The assertion $(ii)$ in the statement of Theorem~\ref{T:char2}.
\end{enumerate}
This completes the proof.  
\end{proof}
\smallskip

\begin{remark}\label{Rem:DOM}
In Section~\ref{S:intro}, we mentioned that the sets $\quotn{n}$ are domains. That
each $\quotn{n}$, $n\geq 2$, is open can be established by a classical argument. It can be deduced
from the fact that the condition defining $\quotn{n}$ is an open condition; that $\cplx$-affine algebraic
hypersurfaces of degree\,$\leq n$ vary continuously\,---\,in an appropriate sense; see
\cite[Chapter~1, \S\,1.2]{chirka:cas1989}\,---\,with respect to the coefficients of their defining
functions; and that the varieties occurring in the definition of $\quotn{n}$ have a rather simple
form. {\bf However}, the proof of the previous theorem provides a slick way of establishing the
openness of $\quotn{n}$. Fix an $n\geq 2$ and let $(x_0, y_0)\in \quotn{n}$. By the implication
$(a)\Rightarrow (e)$ in the above proof, we get, for each $w\in \cDee$:
\[
 (Z^w_1,\dots, Z^w_{n-1})\,:=\,\left(\frac{y_{0, 1}-wx_{0, 2}}{1 - wx_{0, 1}},\dots,
 \frac{y_{0,\,n-1}-wx_{0,\,n}}{1 - wx_{0, 1}}\right)\in \sympd{n-1}.
\]
As $\sympd{n-1}$ is open and $w$ varies through a compact set,
there exists an $\eps > 0$ such that
the polydiscs $\boldsymbol{\Delta}_w :=D(Z^w_1; \eps)\times\dots\times D(Z^w_{n-1}; \eps)
\subset \sympd{n-1}$. By the implication $(a)\Rightarrow (c)$, the
set $(-x_{0, 1}\cDee + 1)\not\ni 0$, whence
by equicontinuity we can find a $\delta > 0$ such that
\[
 \left(\frac{y_1-wx_2}{1 - wx_1},\dots,\frac{y_{n-1}-wx_n}{1 - wx_1}\right)\in \boldsymbol{\Delta}_w \ \ \
 \forall (x, y)\in \bol^{2n-1}((x_0, y_0); \delta)
\]
(where $\bol^N(a; r)$ denotes the open Euclidean ball centered at $a\in \cplx^N$ of radius $r$), for
{\em each} $w\in \cDee$.
This time, by the implication $(e)\Rightarrow (a)$, we get
$\bol^{2n-1}((x_0, y_0); \delta)\subset \quotn{n}$. It follows that $\quotn{n}$ is open. The
connectedness of $\quotn{n}$ is a consequence of part~(2) of Theorem~\ref{T:equiv}.    
\end{remark}

\section{Proofs of the Main Theorems}\label{S:main}

The results of the last two sections provide us all the tools needed to prove
Theorems~\ref{T:equiv} and \ref{T:nec}.

\begin{proof}[The proof of Theorem~\ref{T:equiv}]
We begin by reminding the reader of the notational comment in Remark~\ref{Rem:column}.
Recall further: if $S$ is a square matrix, then $\cmpn{S}$ will denote the companion matrix
of its characteristic polynomial (normalized as in Definition~\ref{D:nonderog}).
\smallskip

\noindent{{\bf 1)} Let $\Lambda$ denote
the holomorphic identification $\Lambda: \cplx^{n\times n}\lrarw
\cplx^{(n-1)\times (n-1)}\times\cplx^{n-1}\times\Cn$,
\[
 \Lambda(A)\,:=\,\big(\de{A},\,(a_{j,1})_{2\leq j\leq n},\,(a_{1,\,k})_{1\leq k\leq n}\big),
\] 
writing $A = [a_{j,\,k}]$. Define
\begin{align*}
 \diff\,&:=\,\{X\in \cplx^{(n-1)\times(n-1)} : X \ \text{is non-derogatory}\}, \\
 \mathfrak{S}^1\,&:=\,(\cplx^{(n-1)\times(n-1)}\setminus\diff)\times\cplx^{n-1}\times\Cn.
\end{align*}}
Define the function
$\Theta: \diff\times\cplx^{n-1}\lrarw \cplx$ as follows:
\begin{equation}\label{E:func}
 \Theta(X, v)\,:=\,\det\big(\,[ v \ \ Xv\,\dots\,X^{n-2}v]\,\big).
\end{equation}
Fix some $X^0\in \diff$. As $X^0$ is non-derogatory, it has a cyclic vector: call it 
$\cyc{X^0}$. Clearly, $\Theta(X^0, \cyc{X^0}) \neq 0$, whence 
$\Theta(X^0;\,\bcdot)\not\equiv 0$, and this is true for any $X^0\in \diff$. By construction,
$\Theta(X;\,\bcdot)$ and $\Theta$ are holomorphic functions.
Since $\Theta(X;\,\bcdot) \not\equiv 0$ (for $X\in \diff$) and $\Theta\not\equiv 0$, it is a classical
result\,---\,see, for instance, \cite[Theorem~14.4.9]{rudin1980:FT}\,---\,that
\begin{align}
 \Theta(X;\,\bcdot)^{-1}\{0\}\times\Cn \varsubsetneq &\,(\{X\}\times\cplx^{n-1}\times\Cn) \ 
 \text{has zero ($(4n-2)$-dim'l.)} \ \notag \\
 &\,\text{Lebesgue measure (for each $X\in \diff$),} \notag \\ 
 \Theta^{-1}\{0\}\times\Cn \varsubsetneq &\,(\diff\times\cplx^{n-1}\times\Cn) \ \text{has
 zero ($2n^2$-dim'l.) Lebesgue measure.} \label{E:lebMeas}
\end{align}
Note that, for a matrix $X\in \diff$, $\Theta(X, v)\neq 0\,\iff\,v \ \text{is a cyclic vector of $X$}$.
Hence, writing $\Theta^{-1}\{0\}\times\Cn =:\mathfrak{S}^2$, we get 
\begin{equation}\label{E:geneLoc}
 \gene\,=\,\OM_{1,\,n}\!\cap \Lambda^{-1}((\diff\times\cplx^{n-1}\times\Cn)\setminus\mathfrak{S}^2).
\end{equation}
Since $\Lambda^{-1}(\mathfrak{S}^1)$ has zero ($2n^2$-dimensional) Lebesgue measure, it follows
from \eqref{E:lebMeas} and \eqref{E:geneLoc} that $(\OM_{1,\,n}\setminus\gene)$ has zero
Lebesgue measure.
\medskip

\noindent{{\bf 2)} Part~(2) is essentially the first part of Proposition~\ref{P:mu-to-quot}. 
That $\pi_n$ is holomorphic is trivial as it is a polynomial map.}
\medskip

\noindent{{\bf 3)} If there exists a holomorphic map $F : \Dee\lrarw \OM_{1,\,n}$ that interpolates the
given data, then, by part~(2), $f := \pi_n\circ F$ has the required properties.}
\smallskip

Let us now assume that there exists a holomorphic map $f : \Dee\lrarw \quotn{n}$ such that
$f(\zt_j) = \pi_n(W_j)$ for every $j$. Let us write $f = (\bsf{x}, \bsf{y})$, where
$\bsf{x} =: ({\sf x}_1,\dots, {\sf x}_n) : \Dee\lrarw \Cn$ and
$\bsf{y} =: ({\sf y}_1,\dots, {\sf y}_{n-1}) : \Dee\lrarw \cplx^{n-1}$. Let
$p_1,\dots p_{n-1}$ be the polynomials given by Lemma~\ref{L:imp}. Define the
holomorphic map $\phi : \Dee\lrarw \cplx^{n\times n}$ as follows:
\begin{equation}\label{E:1st_approx}
 \phi(\zt)\,:=\,\begin{bmatrix}
 			 \ {\sf x}_1(\zt) &\vline & p_1\circ f(\zt)	& \dots & p_{n-2}\circ f(\zt)	& p_{n-1}\circ f(\zt) \ \\ \hline
 			 \ 1                  &\vline & 0                     & {}    & {}			& (-1)^n{\sf y}_{n-1}(\zt) \ \\
 			 \ 0                  &\vline & 1                     & 0     & \text{\LARGE{$\boldsymbol{0}$}}
 			 & (-1)^{n-1}{\sf y}_{n-2}(\zt) \ \\
 			 \ \vdots           &\vline & {}                    & \ddots & \ddots		& \vdots \ \\
 			 \ 0                  &\vline & \text{\LARGE{$\boldsymbol{0}$}}         & {}  & 1 &  {\sf y}_{1}(\zt) \
 			 \end{bmatrix}.
\end{equation}
Note that, in the notation of Lemma~\ref{L:imp}, $\phi = B(\bsf{x}, \bsf{y})$. Hence, it follows from
the second assertion in Proposition~\ref{P:mu-to-quot} that $\phi : \Dee\lrarw \OM_{1,\,n}$. And
it follows from Lemma~\ref{L:imp} that
\begin{equation}\label{E:interp1}
 \pi_n\circ\phi(\zt_j)\,=\,\pi_n(W_j), \ \ j = 1,\dots, M.
 \end{equation}
\vskip1mm

The above $\phi$ is not, in general, the desired $F$ (although the range
of $\phi$ {\em is} contained in $\OM_{1,\,n}$). We must now address this problem.
Let $\cplx^*\oplus GL_{n-1}(\cplx)$ be as introduced just before the statement of
Lemma~\ref{L:orbit}. The importance of this group to our discussion is the following simple
(but powerful):
\smallskip

\noindent{{\bf Fact.} {\em For a matrix $A\in \cplx^{n\times n}$,
$\mu_{E^{1,\,n}}(A) =
\mu_{E^{1,\,n}}(G^{-1}AG)$ for each $G\in \cplx^*\oplus GL_{n-1}(\cplx)$.}}
\medskip
 
\noindent{So, the idea behind what follows is to construct an appropriate holomorphic
$(\cplx^*\oplus GL_{n-1}(\cplx))$-valued map $\psi$, defined on $\Dee$, such that
$F\,:=\psi^{-1}\phi\,\psi$ is the desired interpolant.}
\smallskip

To this end, we point out that by \eqref{E:interp1} and by the definition of the map $\pi_n$, we get 
\begin{equation}\label{E:W-dots}
 \de{\phi(\zt_j)}\,=\,\cmpn{\de{W_j}},  \ \ j = 1,\dots, M.
\end{equation}
Now refer to the proof of Lemma~\ref{L:uni_det}. By the fact that
$W_1,\dots, W_M\in \gene$, there exists a unique
matrix $\Gamma_j\in GL_{n-1}(\cplx)$, $j = 1,\dots, M$, such that
\begin{equation}\label{E:interp2}
 \Gamma_j^{-1}\de{W_j}\Gamma_j\,=\,\cmpn{\de{W_j}},  \ \quad \
 \Gamma_j\,[1 \ 0\,\dots\,0]^T=\,[\,{}^jw_{2,1} \ \ {}^jw_{3,1}\,\dots\,{}^jw_{n,1}]^T, \ \
 j = 1,\dots, M,
\end{equation}
where we write $W_j = [\,{}^jw_{i,\,k}]$ for each $j\leq M$. At this point, we know
two things:
\begin{itemize}
 \item by examining \eqref{E:1st_approx},
 $(1\oplus\Gamma_j)\phi(\zt_j)(1\oplus\Gamma_j)^{-1}$ belongs to $\geep$;
 \item by the above observation, \eqref{E:W-dots}, and \eqref{E:interp2}, if we set 
 $A := (1\oplus\Gamma_j)\phi(\zt_j)(1\oplus\Gamma_j)^{-1}$ and 
 $B := W_j$, then this choice of $(A, B)$ satisfies the hypothesis of Lemma~\ref{L:uni_det};
\end{itemize}
for each $j = 1,\dots, M$.  Here, we have used the abbreviated notation,
introduced in Section~\ref{S:prelim}, for an element in $\cplx^*\oplus GL_{n-1}(\cplx)$.
Therefore, Lemma~\ref{L:uni_det} tells us:
\begin{equation}\label{E:top-row}
 \text{The first row of $(1\oplus\Gamma_j)\phi(\zt_j)(1\oplus\Gamma_j)^{-1}$ equals
 the first row of $W_j$ for each $j\leq M$.}
\end{equation}
 
As each $\Gamma_j$ above is an invertible matrix, there exists a matrix
$L_j\in \cplx^{(n-1)\times(n-1)}$ such that $\exp(L_j) = \Gamma_j$. Let
$\Psi : \Dee\lrarw \cplx^{(n-1)\times(n-1)}$ be any matrix-valued holomorphic
function such that $\Psi(\zt_j) = L_j$, $j = 1,\dots M$. Now, let us
define the following $(\cplx^*\oplus GL_{n-1}(\cplx))$-valued holomorphic map:
\[
 \psi(\zt)\,:=\,1\oplus e^{-\Psi(\zt)} \ \ \forall \zt\in \Dee.
\]
Since we have shown that $\phi(\zt)\in \OM_{1,\,n} \ \forall \zt\in \Dee$, it follows
from the Fact stated above that:
\[
 \psi(\zt)^{-1}\phi(\zt)\psi(\zt)\,\in\,\OM_{1,\,n} \ \ \forall \zt\in \Dee.
\]
We now write $F := \psi^{-1}\phi\psi$. We have just argued that 
$F : \Dee\lrarw \OM_{1,\,n}$ and is holomorphic. From \eqref{E:1st_approx}, 
\eqref{E:W-dots}, \eqref{E:interp2}, and \eqref{E:top-row}, it follows that this $F$ is the
desired interpolant. 
\end{proof}

The ideas used in the above proof lead to some observations that would be relevant
when dealing with the unit ``$\mu_E$-balls'' when $E$ is of greater complexity.

\begin{remark}\label{Rem:good-q}
Probably the most important role in the proof of Theorem~\ref{T:equiv} was played
by the fact that the group $\cplx^*\oplus GL_{n-1}(\cplx)$ acts on $\OM_{1,\,n}$.
A more abstract look into the relationship between this action and the domain 
$\quotn{n}$ might suggest the way forward in formulating analogues of
Theorem~\ref{T:equiv} for more general cases of $E$. For both the pairs $(\OM_n, \sympd{n})$ and
$(\OM_{1,\,n}, \quotn{n})$, $n\geq 2$, it turns out that the relationship of the
lower-dimensional domain to its associated unit ``$\mu_E$-ball'' is analogous
to the categorical quotient associated to an affine algebraic variety with a reductive
group acting on it. We say ``analogous'' because $\OM_{1,\,n}$ is not an
algebraic variety. But there {\em are} settings\,---\,see \cite{snow:rgaSs82} by
Snow, for instance\,---\,to which the constructions of classical geometric invariant
theory carry over. In this work, owing to the nature
of the ``structural space'' $E^{1,\,n}$, we did not need to appeal to the abstract
theory (which still needs some enhancements to Snow's work). However, in
that language, the components of the map $\pi_n$ are the generators of
the ring of $G$-invariant functions, $\quotn{n}$ is the analogue of the
categorical quotient, and $\gene$ is the union of all {\bf closed} $G$-orbits of
$\OM_{1,\,n}$, $G = \cplx^*\oplus GL_{n-1}(\cplx)$.
For a general $E$, Lemma~\ref{L:det} will {\em still} give us the generators of
the ring of $G$-invariant functions on $\OM_E$ (for an appropriate $G$).
However, when $E$ is of much greater complexity, the abstract viewpoint
hinted at might be helpful in determining
the analogue of the set $\gene$ without engaging in ever more
complex computations.
\end{remark} 

We now come to the proof of Theorem~\ref{T:nec}. This proof is an 
 easy consequence of part~(2) of the previous theorem and Theorem~\ref{T:char1}.
\vspace{-1mm}

\begin{proof}[The proof of Theorem~\ref{T:nec}]
In view of Theorem~\ref{T:equiv}-$(2)$, the function $\pi_n\circ F$ is a holomorphic map
and $\pi_n\circ F(\Dee) \subset \quotn{n}$. Thus, for each
$\zt\in \Dee$, $\pi_n\circ F(\zt)\in \quotn{n}$. It follows from this, from
condition~(I) in Theorem~\ref{T:char1}, 
and the Maximum Modulus Theorem that if we {\em fix} a point $z\in \cDee$, then
\begin{equation}\label{E:bound}
 |\vP(z\,;\,X\circ F(\zt), Y\circ F(\zt))|\,<\,1 \ \ \text{for each $\zt\in \Dee$}.
\end{equation}
It is obvious that the functions
\[
 \zt\!\longmapsto P_n(z\,;\,X\circ F(\zt)), \quad \quad
 \zt\!\longmapsto Q_n(z\,;\,Y\circ F(\zt)),  \ \ \zt\in \Dee, 
\]
are holomorphic functions. Thus, it follows from the bound \eqref{E:bound} that
$f^z$ defined by
\[
 f^{z}(\zt)\,:=\,\vP(z\,; X\circ F(\zt), Y\circ F(\zt)), \ \ \zt\in \Dee,
\]
is a holomorphic $\Dee$-valued function. This function, for each fixed $z\in \cDee$, satisfies
\[
 f^z(\zt_j)\,=\,\vP(z\,;\,X(W_j), Y(W_j)), \ \ j = 1,\dots, M.
\]
It follows from the classical result by Pick that the $M\times M$ matrix $M_z$ is positive
semi-definite.
\end{proof}
\vspace{-2mm}

We end this article with an observation:

\begin{remark}\label{Rem:returns}
Two major effects of the idea introduced by Agler--Young in \cite{aglerYoung:cldC2si99}\,---\,of
which this work is an extension\,---\,are the reduction in the dimensional complexity of the
problem $(*)$, and the ability to deduce necessary conditions for Nevanlinna--Pick interpolation
such as Theorem~\ref{T:nec}. However, the discussion in Remark~\ref{Rem:good-q} suggests that
the advantage to be gained from the first of those two features has certain limits.
As the number of the disparate diagonal blocks determining
$E$ increases, the number of generators of the ring of $G$-invariant functions on
$\OM_E$ (for an appropriate reductive
group $G$ naturally associated with $\OM_E$ and acting on it by conjugation) would tend
to grow; see Lemma~\ref{L:det} above. This implies that there would be diminishing advantage, in
terms of reduction in dimensional complexity of the problem $(*)$, in working with
analogues of $\sympd{n}$ or $\quotn{n}$. 
\end{remark}

\end{document}